\theoremstyle{definition}
\def\A{\mathbb{A}}
\def\C{\mathbb{C}}
\def\R{\mathbb{R}}
\def\Z{\mathbb{Z}}
\def\P{\mathbb{P}}
\def\R{\mathbb{R}}
\def\g{\mathfrak{g}}
\def\h{\mathfrak{h}}
\def\h{\mathfrak{h}}
\def\FF{\mathcal{F}}
\def\FG{\mathcal{G}}
\def\FI{\mathcal{I}}
\def\FS{\mathcal{S}}
\def\so{\mathfrak{so}}
\def\sp{\mathfrak{sp}}
\def\sbe{\subseteq}
\def\la{\langle}
\def\ra{\rangle}
\newcommand{\Spec}{\operatorname{Spec}}
\newcommand{\rank}{\operatorname{rank}}
\newcommand{\epi}{\operatorname{epi}}
\newcommand{\Hom}{\operatorname{Hom}}
\newcommand{\Ad}{\operatorname{Ad}}
\newcommand{\Aut}{\operatorname{Aut}}
\newcommand{\ssim}{\operatorname{ss}}
\newcommand{\GL}{\mathrm{GL}}
\newcommand{\PGL}{\mathrm{PGL}}
\numberwithin{equation}{section}
\newcommand{\tab}
\newtheorem{thm}{Theorem}[section]
\newtheorem{cor}[thm]{Corollary}
\newtheorem{prop}[thm]{Proposition}
\newtheorem{lem}[thm]{Lemma}
\newtheorem{defn}[thm]{Definition}
\newtheorem{claim}{Claim}
\newtheorem*{claim*}{Claim}
\begin{document}

\title[Representation Variety of  Fuchsian groups]{{Representation Variety of Fuchsian Groups in SO(\lowercase{p},\lowercase{q})} }

\author{Krishna Kishore}

\email{venkatakrishna@iiserpune.ac.in}

\address{Venkata Krishna Kishore Gangavarapu,
Department of Mathematics,
Indian Institute of Science Education and Research PUNE,
Pune, Maharashtra
India, 411 008}

\begin{abstract}
We estimate the dimension of the variety of homomorphisms from  $\Gamma$ to $ SO(p,q)$ with Zariski dense image, where $\Gamma$ is a Fuchsian group, and $SO(p,q)$ is the indefinite special orthogonal group with signature $(p,q)$. 
\end{abstract}

\maketitle

\section{Introduction}\label{intro}
Let $\Gamma$ be a finitely generated group and let $G$ be a linear algebraic group defined over $\R$. The set of homomorphisms $\Hom(\Gamma,G(\R))$ coincides with the real points of the \textit{representation variety} $X_{\Gamma,G} :=$ Hom$(\Gamma,G)$. (We note here that by a \textit{variety} we mean an affine scheme of finite type over $\R$. In particular, we do not assume that it is irreducible or reduced). Let $X^{\epi}_{\Gamma,G}(\R)$ denote the Zariski closure of the set of representations $\Gamma \to G(\R)$ with Zariski-dense image.  The main goal of the paper is to show that the dimension of $X^{\epi}_{\Gamma,G}$ is roughly of the order $(1-\chi(\Gamma)) \dim G$, where $G(\R)$ is the (indefinite) special orthogonal group $SO(p,q)$ with signature $(p,q)$. Before stating our main result, we provide a very brief discussion of a recent work in this direction.

In a recent work,  Liebeck and Shalev \cite{LS} studied the representations of cocompact oriented Fuchsian groups of genus $g \geq 2$ and cocompact nonoriented Fuchsian groups of genus $g \geq 3$ in semisimple linear algebraic groups; this study is based on the results from the theory of finite quotients of Fuchsian groups. An explicit formula for the dimension of $\Hom(\Gamma,G)$ is given in the case where $G$ is a connected simple algebraic group over an algebraically closed field of \textit{arbitrary characteristic}. On the other hand, based on the deformation theory of Weil \cite{We}, Larsen and Lubotzky \cite{LL} studied the representation variety of cocompact oriented Fuchsian groups $\Gamma$ of genus $g \geq 0$ in almost-simple real algebraic groups. (From now onward, we mean by Fuchisan group a cocompact oriented Fuchsian group of genus $g\geq 0$). An estimate of the dimension of $X^{\epi}_{\Gamma,G}$, is given in the following cases: 
\begin{enumerate}
  \item
  $\Gamma$ is any Fuchsian group and $G$ is $SO(n)$, $SU(n)$, any split simple real algebraic group.
  \item
  $\Gamma$ is a $SO(3)$-dense Fuchsian group and $G$ is any compact simple real algebraic group. We note here, a Fuchsian group $\Gamma$ is said to be $H$-dense, where $H$ is an almost simple algebraic group, if there exists a homomorphism $\phi: \Gamma \rightarrow H$ with dense image and $\phi$ is injective for all finite cyclic subgroups of $\Gamma$ \cite[Definition 1.1]{LL}. 
\end{enumerate}
\noindent The dimension of $X^{\epi}_{\Gamma,G}$ in these cases turns out to be roughly of the order $(1-\chi(\Gamma)) \dim G$, where $\chi(\Gamma)$ is the Euler characteristic of $\Gamma$. The main goal of this paper is to show that a similar estimate of the dimension of $X^{\epi}_{\Gamma,SO(p,q)}$ holds, where $SO(p,q)$ is the indefinite special orthogonal group with signature $(p,q)$. Note that when $p = q$, or $p = q+1$, or $q = p+1$ the group $SO(p,q)$ is split; as mentioned earlier, the estimates in these cases are treated by Larsen and Lubotzky \cite{LL}. The main result of this paper deals with the remaining cases, that is when $p \neq q$, $p \neq q+1$, and $q \neq p+1$; in these cases $SO(p,q)$ is nonsplit and noncompact. Before stating the main result we recall some notation.

A cocompact oriented Fuchsian group $\Gamma$ admits a presentation of the following kind: consider non-negative integers $m$ and $g$ and integers $d_1,\ldots,d_m$ greater than or equal to $2$, such that the Euler characteristic
\begin{equation*} \label{cha}
\chi(\Gamma) : = 2-2g-\sum_{i=1}^{m}(1 - d_i^{-1})
\end{equation*}
\noindent is negative. For some choice of $m$, $g$, and $d_i$, $\Gamma$ has a presentation
\begin{equation}\label{pre}
\begin{split}
\Gamma  :=  \langle x_1,\ldots,x_m,y_1,\ldots,y_g,z_1,\ldots, & z_g  \; \vert \;  x_1^{d_1},\ldots,x_m^{d_m}, \\
 & x_1 \ldots x_m[y_1,z_1]\ldots[y_g,z_g] \rangle. 
 \end{split}
\end{equation}

\noindent Now we state the main theorem of the paper.

\begin{thm}\label{intro-thm-2}
For every Fuchsian group $\Gamma$ and for every special orthogonal group $SO(p,q)$, where $p \neq q$, $q \neq p+1$ and $p,q$ sufficiently large,
\begin{equation*}
 \dim   X^{\epi}_{\Gamma,SO(p,q)}  =  (1- \chi(\Gamma))   \; \dim  SO(p,q) + O(\rank SO(p,q) ),
\end{equation*}
where the implicit constant depends only on $\Gamma$.
\end{thm}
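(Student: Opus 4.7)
The plan is to adapt the Larsen--Lubotzky framework of \cite{LL} to the non-split indefinite setting. Using the presentation \eqref{pre}, the representation variety $X_{\Gamma,G}$ with $G = SO(p,q)$ embeds as a closed subvariety of $T_{d_1}^G \times \cdots \times T_{d_m}^G \times G^{2g}$ cut out by the single word map $x_1 \cdots x_m \prod_j [y_j, z_j] = 1$, where $T_d^G \subset G$ denotes the variety of elements of order dividing $d$. A standard fiber-dimension count yields
\[ \dim X_{\Gamma,G} \leq \sum_{i=1}^m \dim T_{d_i}^G + (2g-1)\dim G. \]
Since $1 - \chi(\Gamma) = (2g-1) + \sum_i (1-1/d_i)$, the claimed upper bound reduces to the uniform estimate $\dim T_d^{SO(p,q)} = (1-1/d)\dim SO(p,q) + O(\rank SO(p,q))$. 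This I would establish by passing to the complexification $SO(p+q,\C)$ and classifying semisimple finite-order conjugacy classes via eigenvalue multiplicities: the maximal-dimensional order-$d$ class has centralizer of dimension $\dim G/d + O(\rank G)$, and one checks that a class of the same complex dimension meets the real form $SO(p,q)$.

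For the matching lower bound I would exhibit a Zariski-dense representation $\rho_0$ and invoke Weil's deformation theory. The Zariski tangent space at $\rho_0$ is $Z^1(\Gamma,\Ad\rho_0)$, and Poincar\'e duality for Fuchsian groups --- with the appropriate local contribution at each cone point $x_i$ --- yields a dimension formula matching the upper bound, up to $O(\rank G)$, provided $H^0(\Gamma,\Ad\rho_0)$ is small. Zariski density forces $\g^{\rho_0(\Gamma)}$ to lie in the center of $\g$, so $\dim H^0 = O(\rank G)$; duality controls $H^2$ in the same way. Together these identify the local dimension at $\rho_0$ with the claimed value.

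The main obstacle is the construction of $\rho_0$ itself. Taking $p > q$ by symmetry, $SO(p,q)$ contains the subgroup $H_1 := SO(q,q) \times SO(p-q)$ with $SO(q,q)$ split and $SO(p-q)$ compact. The split and compact results of \cite{LL} each furnish a Zariski-dense Fuchsian representation into the corresponding factor, and combining them produces $\rho_1: \Gamma \to H_1 \subsetneq SO(p,q)$. To promote $\rho_1$ to a Zariski-dense representation I would argue that its deformation space has tangent vectors transverse to $\mathrm{Lie}(H_1)$ --- that is, $H^1(\Gamma, \so(p,q)/\mathrm{Lie}(H_1))$, computed via $\Ad \circ \rho_1$, is nonzero --- so a generic small deformation of $\rho_1$ exits $H_1$ and in fact escapes every proper closed algebraic overgroup of $H_1$ inside $SO(p,q)$. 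Ruling out such intermediate overgroups requires a subgroup classification for $SO(p,q)$, and I expect this to be the technical heart of the proof: the hypotheses $p \neq q$, $q \neq p+1$, and ``$p,q$ sufficiently large'' are precisely what is needed to avoid the split and quasi-split cases (already in \cite{LL}) and the low-rank sporadic embeddings that would otherwise obstruct the subgroup analysis.
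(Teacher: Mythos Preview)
Your framework is right --- Weil's tangent-space formula, with a subgroup analysis near a deformation point as the crux --- but two choices diverge from the paper.

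On the upper bound: as phrased, your fiber-dimension inequality points the wrong way (the semicontinuity bound gives $\dim w^{-1}(1) \geq \dim(\mathrm{domain}) - \dim G$, not $\leq$, so extracting an upper bound on the special fiber over $1$ needs further argument). The paper does not argue globally: it bounds each component of $X^{\epi}_{\Gamma,G}$ by $\dim Z^1(\Gamma,\Ad\rho)$ at a Zariski-dense $\rho$ on that component, quoting \cite[Proposition~2.1]{LL} directly.

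The substantive divergence is the deformation point. You propose $H_1 = SO(q,q) \times SO(p-q)$; the paper uses the \emph{maximal compact} $H = SO(p) \times SO(q)$, and never constructs an actual Zariski-dense representation. It takes $\rho_0$ dense in $H$ (a product of two compact representations from \cite{LL}, with balanced eigenvalue multiplicities on the $x_i$ by construction), checks $\rho_0$ is nonsingular in $X_{\Gamma,G}$, and proves that in an $\R$-neighborhood every $\rho$ is either dense in $G$ or conjugate into $H$ (Theorem~\ref{Deformation-point-II}); a dimension comparison (Proposition~\ref{dimension-compare}) then shows the $H$-locus is proper closed, so the component $C \ni \rho_0$ lies in $X^{\epi}_{\Gamma,G}$ and $\dim C$ is read off at $\rho_0$ itself. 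Compactness of $H$ is used essentially: Lemma~\ref{det-trick} needs $\rho_0(\Gamma)$ dense in the \emph{real} topology to exclude the disconnected overgroup $S(O(p)\times O(q))$, and because $H$ preserves a \emph{definite} decomposition the neighborhood analysis collapses to a clean dichotomy --- $\rho$ absolutely irreducible on $V$, or $\rho$ stabilizes a positive $p$-subspace and its negative complement and nothing else (Proposition~\ref{Deformation-point}). The irreducible branch is then handled by a rank comparison (Proposition~\ref{Rank-irreducible}: irreducible proper subgroups have rank strictly below $\rank H$) combined with the word-map ``maximal subgroup avoidance'' (Proposition~\ref{Maximal-avoidance}) and a reductive Jordan theorem (Proposition~\ref{larsen}) to pass from $\overline{\rho(\Gamma)}$ to its connected semisimple part. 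Your $H_1$ stabilizes an \emph{indefinite} $2q$-subspace, so nearby $\rho$ can preserve subspaces of many signatures, the real-density step fails on the noncompact $SO(q,q)$ factor, and the overgroup lattice above $H_1$ is different; you would have to rebuild the entire neighborhood analysis, and the paper gives no indication that route goes through.
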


\noindent The proof of the theorem is based on the deformation theory of Weil \cite{We} and the work of Larsen and Lubotzky \cite{LL}. While the upper bound follows almost immediately from the work of Larsen and Lubotzky, establishing a lower bound is difficult. A brief sketch of the proof follows.

Let $\Gamma$ be a Fuchsian group with the presentation given by (\ref{pre}), and let $G$ be any linear algebraic group.  The Zariski tangent space at any point $\rho \in X_{\Gamma,G}$ is given by the space of $1$-cocycles $Z^1(\Gamma,\Ad \circ \rho)$, where $\Ad$ is the adjoint representation of $G$ in its Lie algebra $\g$. Let $\g^{*} = (\Ad \circ \rho)^{*}$ be the coadjoint representation of $\Gamma$. The dimension of $Z^1(\Gamma,\Ad \circ \rho)$ is given by the following formula \cite{We}:
\begin{equation}
\label{tas}
\begin{split}
\dim   Z^1(\Gamma,\Ad \circ \rho) &=  (1 - \chi(\Gamma))  \dim  G \;  +
   \dim (\g^{*})^\Gamma   \\ 
 &+ \sum_{i =1 }^{m} \left( \frac{ \; \dim   G }{d_i} -  \; \dim   \g^{\langle x_i \rangle} \right)
\end{split}
\end{equation}
\noindent where $(\g^{*})^\Gamma$ denotes the $\Gamma$-invariant vectors under the coadjoint representation of $\Gamma$. Based on this formula, if $\rho : \Gamma \to G(\R)$ is a representation with Zariski-dense image in $G(\R)$, an upper bound on $\dim Z^1(\Gamma,\Ad \circ \rho)$, is given by  \cite[Proposition 2.1]{LL}
\begin{equation}
\begin{split}
\dim Z^1(\Gamma,\Ad \circ \rho) \leq (1 - \chi(\Gamma)) \dim G &+ (2g + m + \rank G) \\
&+ \frac{3}{2} m \rank G.
\end{split}
\end{equation}
As each irreducible component $C$ of $X^{\epi}_{\Gamma,G}$ contains a point $\rho$ with Zariski dense image in $G(\R)$, the dimension of $C(\R)$ is bounded above by the dimension of $Z^1(\Gamma, \Ad \circ \rho)$. The upper bound of $\dim \; X^{\epi}_{\Gamma,G}$ follows.

On the other hand, to establish the lower bound, we construct a representation with Zariski-dense image in $SO(p) \times SO(q)$. As it turns out, $\rho_0$ is a nonsingular point in $X_{\Gamma,SO(p,q)}$, therefore it belongs to a unique irreducible component $C$ of $X_{\Gamma,SO(p,q)}$. The main emphasis of this paper is to show that that there exists an open neighborhood of $\rho_0$ (in the real topology), that does \textit{not} contain representations of the following types:

\begin{enumerate} 
\item 
Representations which stabilize some $d$-dimensional subspace where $d \neq p,q,\dim V$ or which stabilize a $p$-dimensional \textit{isotropic} subspace of $V$, where $V = \R^{p+q}$ denotes the natural representation of $SO(p,q)$.
\item 
Representations $\rho \in X_{\Gamma, SO(p,q)}$ such that the semisimple rank of the Zariski-closure of $\rho(\Gamma)$ is strictly less than the semisimple rank of $SO(p) \times SO(q)$. 
\end{enumerate}

\noindent Sieving representations of the former type is based on the theory of symmetric bilinear forms, and is easy. Sieving representations of the latter type is based on the theory of semisimple algebraic groups, and requires some work. In the end, we have the following result:

\begin{thm}\label{intro-neigh}
Let $\Gamma$ be a finitely generated group and let $\rho_0: \Gamma \to SO(p,q)$ be a representation of $\Gamma$ in $SO(p,q)$ with Zariski-dense image in $SO(p) \times SO(q)$. Suppose $p < q $, $q \neq p+1$, and $p,q$ sufficiently large. Then, there exists an open neighborhood of $\rho_0$ in $X_{\Gamma,SO(p,q)}$ (in the real topology), such that each point $\rho$ in the neighborhood satisfies precisely one of the following conditions:

\begin{enumerate}[(a)]
 \item
The Zariski closure of $\rho(\Gamma)$ is $SO(p,q)$.
\item 
The Zariski closure of $\rho(\Gamma)$ is conjugate to $SO(p) \times SO(q)$.
\end{enumerate}
\end{thm}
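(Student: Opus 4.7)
The plan is to proceed via two independent sieves, following the outline in the introduction: a geometric sieve based on the theory of symmetric bilinear forms, and a structural sieve based on the classification of semisimple algebraic subgroups of $SO(p,q)$. The underlying structural facts that make the dichotomy possible are (i) $SO(p)\times SO(q)$ is a maximal proper connected subgroup of $SO(p,q)$, and (ii) for $p,q$ large with $p\neq q$ and $q\neq p+1$, every proper connected subgroup of $SO(p,q)$ acting irreducibly on $V=\R^{p+q}$ has semisimple rank strictly less than $r := \lfloor p/2\rfloor + \lfloor q/2\rfloor$, by Dynkin's classification of maximal closed subgroups of classical groups.

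\emph{Geometric sieve.} Since $\rho_0(\Gamma)$ is Zariski-dense in $SO(p)\times SO(q)$ acting on $V = V_1\oplus V_2$ with $V_1$ positive-definite of dimension $p$ and $V_2$ negative-definite of dimension $q$, and since the natural representations of $SO(p)$ and $SO(q)$ are irreducible on $V_1$ and $V_2$, the only $\rho_0$-invariant subspaces are $0,V_1,V_2,V$, none of them isotropic or of an unintended signature. For each $d\in\{1,\ldots,p+q-1\}\setminus\{p,q\}$ the locus of representations stabilizing some $d$-dimensional subspace of $V$ is Zariski-closed in $X_{\Gamma,SO(p,q)}$ (a projection from a closed subscheme of $X_{\Gamma,SO(p,q)}\times\mathrm{Gr}_d(V)$); similarly for the loci stabilizing a $p$-dimensional isotropic subspace or a $p$-dimensional subspace whose restricted form is indefinite or negative-definite, together with the mirror conditions in dimension $q$. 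Since $\rho_0$ avoids all these Zariski-closed loci, they can be excluded in a Zariski-open (hence real-open) neighborhood $U_1$ of $\rho_0$. For $\rho\in U_1$ the image $\rho(\Gamma)$ either acts irreducibly on $V$ or preserves a non-degenerate orthogonal decomposition $V = W_1\oplus W_2$ with $W_1$ positive-definite of dimension $p$ and $W_2$ negative-definite of dimension $q$; in the latter case $\rho(\Gamma)\subseteq SO(W_1)\times SO(W_2)$, a conjugate of $SO(p)\times SO(q)$.

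\emph{Structural sieve.} Let $H$ denote the identity component of the Zariski closure of $\rho(\Gamma)$. The goal is to rule out $H$ having semisimple rank strictly less than $r$. This is the principal difficulty, because there are infinitely many conjugacy classes of semisimple subgroups of $SO(p,q)$ of smaller rank, so a finite intersection of Zariski-closed conditions does not suffice. The plan is to pick $\gamma_1,\ldots,\gamma_k\in\Gamma$ such that $\rho_0(\gamma_i)$ are regular semisimple elements whose simultaneous centralizer in $SO(p)\times SO(q)$ is a maximal torus $T_0$ of dimension $r$, and such that their eigenvalues realize a set of $r$ multiplicatively independent characters of $T_0$. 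Regular semisimplicity and its consequences (dimension of centralizers, spacing of eigenvalues) are real-open conditions; via a perturbation argument in the style of Larsen--Lubotzky \cite{LL}, for $\rho$ in a further real-open neighborhood $U_2\subseteq U_1$ the Zariski closure of $\langle\rho(\gamma_1),\ldots,\rho(\gamma_k)\rangle$ still contains a torus of dimension $\geq r$, so $H$ has semisimple rank $\geq r$. I expect this is the most delicate step of the proof.

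\emph{Combining the sieves.} For $\rho\in U_2$: if $\rho(\Gamma)$ acts irreducibly on $V$, then $H$ is an irreducibly-acting closed subgroup of $SO(p,q)$ with semisimple rank at least $r$, and (ii) forces $H = SO(p,q)$. If $\rho(\Gamma)$ preserves the orthogonal decomposition $V=W_1\oplus W_2$, then $H\subseteq SO(W_1)\times SO(W_2)$; the geometric sieve further implies that $H$ acts irreducibly on each $W_i$, since any proper invariant subspace of $W_i$ would give $\rho(\Gamma)$ a stabilized subspace of dimension not in $\{0,p,q,p+q\}$. Combined with the rank bound and Dynkin's classification applied inside each factor (vacuous for $p,q$ large enough), this forces $H = SO(W_1)\times SO(W_2)$, conjugate to $SO(p)\times SO(q)$. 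The two alternatives (a) and (b) are mutually exclusive, which proves the theorem.
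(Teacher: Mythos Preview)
Your two-sieve outline is the paper's strategy, but there are two genuine gaps.

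First, the structural sieve as you describe it does not give an open condition. Multiplicative independence of eigenvalues is \emph{not} stable under perturbation, and once the $\rho(\gamma_i)$ stop commuting there is no common torus to point to; so the claim that $\overline{\langle\rho(\gamma_1),\ldots,\rho(\gamma_k)\rangle}$ contains a rank-$r$ torus for all nearby $\rho$ is unsupported. The paper obtains an honest open condition by a different route: via the characteristic-polynomial map $\chi\colon G\to\A^{n-1}$ one has $\dim\overline{\chi(S)}=\rank S$ for semisimple $S$ (Proposition~\ref{Image-rank}); there are only finitely many conjugacy classes $S_1,\ldots,S_r$ of proper semisimple subgroups acting irreducibly on $V$ (Lemma~\ref{finite-semisimple}); and Borel's dominance theorem for word maps yields a single commutator word $\gamma\in\Gamma$ with $\chi(\rho_0(\gamma))$ outside the fixed closed set $\bigcup_i\overline{\chi(S_i)}$ of dimension $<r$ (Proposition~\ref{Maximal-avoidance}). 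The condition $\chi(\rho(\gamma))\notin\bigcup_i\overline{\chi(S_i)}$ is then manifestly open in $\rho$.

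Second, you repeatedly pass from $\overline{\rho(\Gamma)}$ to its identity component $H$ without justification. Your geometric sieve controls only $\rho(\Gamma)$-invariant subspaces, so ``$\rho(\Gamma)$ irreducible on $V$'' (or on $W_i$) does not give ``$H$ irreducible on $V$'' (or on $W_i$); a disconnected reductive group can act irreducibly with reducible identity component. The paper closes this with the Jordan-type bound of Proposition~\ref{larsen}: every reductive $H\subset\GL_n$ has an open subgroup $H'=Z(H')[H^\circ,H^\circ]$ of index at most $J(n)$. Setting $m=J(p+q)$ and strengthening the geometric sieve to require absolute irreducibility already for $\rho\restriction_\Delta$, where $\Delta$ is the intersection of all index-$\le m$ subgroups of $\Gamma$ (the properties $(P_m)$ and $(Q_m)$), forces $[H^\circ,H^\circ]$ itself to act irreducibly, after which the rank argument applies. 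Finally, in your case (b) you only reach $H=SO(W_1)\times SO(W_2)$; you have not excluded $\overline{\rho(\Gamma)}=S(O(W_1)\times O(W_2))$, which has the same identity component but is not conjugate to $SO(p)\times SO(q)$. The paper rules this out by a separate eigenvalue-product argument (Lemma~\ref{det-trick}).
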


\noindent Now, using the deformation theory of Weil \cite{We}, we show that the representations of the type $(b)$ in the open neighborhood given by Theorem \ref{intro-neigh} constitute a proper closed subvariety, whence the representations of type $(a)$ constitutes a Zariski dense subset of the irreducible component $C$. It follows that $C$ is contained in  $X^{\epi}_{\Gamma,SO(p,q)}$ (recall $X^{\epi}_{\Gamma,SO(p,q)}$ is the Zariski-closure of the set of representations $\rho: \Gamma \to SO(p,q)$ with Zariski-dense image); hence $\dim X^{\epi}_{\Gamma,SO(p,q)}$ is bounded below by $\dim C$ which in turn can be estimated by estimating the dimension of the Zariski-tangent space at $\rho_0$ given by (\ref{tas}). Along the way to proving Theorem \ref{intro-neigh} we prove the following result which may be of independent interest.

\begin{prop}\label{intro-maximal-avoidance}\emph{(Maximal subgroup avoidance)} Let $\Gamma$ be a finitely generated group. Let $G$ be a semisimple subgroup of the special linear group $SL_n(\R)$. Let $K$ be a semisimple subgroup of $G$ and let $\rho_0 : \Gamma \to G$ be a representation of $\Gamma$ such that the Zariski-closure of $\rho_0(\Gamma)$ is $K$. Let $\{ H_1, \ldots, H_r \}$ represent a finite set of conjugacy classes of proper reductive subgroups of $G$.  Suppose $\rank  H_i < \rank  K$ for all $1\leq i \leq r$. Then there exists $\gamma \in \Gamma$ such that $\rho_0(\gamma) \notin gH_ig^{-1}$ for all $g \in G$ and $1 \leq i \leq r$. 
\end{prop}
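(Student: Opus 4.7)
The idea is to reduce this group-theoretic avoidance statement to a dimension comparison via the characteristic polynomial map. Let $\Phi : SL_n \to \A^{n-1}$ send a matrix to the non-leading coefficients of its characteristic polynomial. Since $\Phi$ is conjugation-invariant, a membership $\rho_0(\gamma) \in g H_i g^{-1}$ forces $\Phi(\rho_0(\gamma)) \in \Phi(H_i)$, so it suffices to produce $\gamma \in \Gamma$ with $\Phi(\rho_0(\gamma)) \notin \bigcup_{i=1}^{r} \Phi(H_i)$.

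The key observation is that for any reductive subgroup $L \sbe SL_n$, the constructible image $\Phi(L) \sbe \A^{n-1}$ has dimension exactly $\rank L$. Indeed, every element of $L$ has the same characteristic polynomial as its semisimple part (Jordan decomposition), and every semisimple element of $L$ is conjugate into a fixed maximal torus $T \sbe L$; since $\Phi$ is conjugation-invariant, this gives $\Phi(L) = \Phi(T)$. Now $T \sbe SL_n$ acts faithfully on $\R^n$, so the weight morphism $T \to (\C^{*})^n$ has trivial kernel as algebraic groups; composing with the quotient by $S_n$ and the elementary-symmetric-polynomial embedding into $\A^{n-1}$, one sees that $\Phi|_T$ is quasi-finite onto its image, whence $\dim \Phi(T) = \dim T = \rank L$.

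Applying this formula to $K$ and to each $H_i$ yields $\dim \Phi(K) = \rank K$ while $\dim \Phi(H_i) = \rank H_i < \rank K$ by hypothesis. Hence the Zariski closure $\overline{Z} := \overline{\bigcup_{i=1}^{r} \Phi(H_i)}$ has dimension strictly less than $\dim \Phi(K)$, so $\Phi(K) \not\sbe \overline{Z}$ and therefore $(\Phi|_K)^{-1}(\overline{Z})$ is a proper closed subvariety of $K$. (If $K$ is disconnected, first descend to the identity component $K^0$---noting $\rank K^0 = \rank K$---and the finite-index subgroup $\Gamma' := \rho_0^{-1}(K^0) \sbe \Gamma$, whose image is still Zariski dense in $K^0$.) Since $\rho_0(\Gamma)$ is Zariski dense in $K$, some element $\rho_0(\gamma)$ lies in the nonempty open complement of $(\Phi|_K)^{-1}(\overline{Z})$; then $\Phi(\rho_0(\gamma)) \notin \overline{Z}$, so $\rho_0(\gamma) \notin gH_ig^{-1}$ for all $g \in G$ and all $i$, as required.

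The main substantive step is the dimension formula $\dim \Phi(L) = \rank L$; it hinges on the faithfulness of the $SL_n$-representation restricted to a maximal torus of $L$, which is automatic from $L \sbe SL_n$ but must be invoked uniformly for $K$ and for each $H_i$. The remaining ingredients---the dimension comparison in $\A^{n-1}$ and the Zariski density of $\rho_0(\Gamma)$---are routine.
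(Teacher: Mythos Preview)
Your approach and the paper's share the same backbone---the characteristic-polynomial map (your $\Phi$, the paper's $\chi$) and the identity $\dim \overline{\chi(L)} = \rank L$ for semisimple $L$ (Proposition~\ref{Image-rank})---but diverge in how they handle the reductive subgroups $H_i$. The paper passes to the semisimple part $S_i = [H_i^\circ,H_i^\circ]$ so that Proposition~\ref{Image-rank} applies directly, and then invokes Borel's theorem on dominance of the word map $(x,y)\mapsto [x^m,y^m]$ (with $m$ the least common multiple of the component counts of the $H_i$) to produce $\gamma$ of this special commutator form, the point being that such commutators land in $S_i$ whenever the constituents lie in $H_i$. You instead extend the dimension identity to reductive $L$ and simply pick $\gamma$ by Zariski density. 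Your route is more elementary and sidesteps the word-map machinery entirely.

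There is one step that needs attention, however. Your proof of the key formula relies on the claim that every semisimple element of $L$ is conjugate into a fixed maximal torus $T\subseteq L$; this holds only for \emph{connected} $L$, since a semisimple element lying in a non-identity component is contained in no torus of $L$ whatsoever. As the $H_i$ are merely reductive and may be disconnected (indeed, this disconnectedness is precisely what the paper's word-map device is meant to handle), your argument does not yet establish the upper bound $\dim \overline{\Phi(H_i)} \le \rank H_i$ in general. The repair is not hard: decompose $H_i$ into finitely many cosets $g_j H_i^\circ$ and observe that $\Phi$ restricted to each coset is constant on $H_i^\circ$-conjugation orbits of $g_j h$; the resulting quotient is a twisted adjoint quotient of $H_i^\circ$, whose dimension is bounded by $\rank H_i^\circ = \rank H_i$. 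With this patch your argument goes through.
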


Also, we shall present a proof of a generalisation of Jordan's theorem on finite groups in $GL_n(\C)$.

\begin{prop}\label{intro-larsen}
For all positive integers $n$ there exists a constant $J(n)$ such that every closed reductive subgroup $H$ of $\GL_n$,
has an open subgroup $H'$ of index $\le J(n)$ such that
$$H' = Z(H') [H^\circ,H^\circ]$$
where $H^\circ$ denotes the identity component of $H$.
\end{prop}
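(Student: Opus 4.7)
The plan is to exploit the canonical decomposition $H^\circ = Z^\circ \cdot D$ with $D = [H^\circ,H^\circ]$ (semisimple) and $Z^\circ = Z(H^\circ)^\circ$ (the central torus), and to pass successively to bounded-index subgroups using standard finiteness results on the automorphisms of a reductive group together with the classical Jordan theorem.

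First I would produce a bounded-index subgroup $H_2 \sbe H$ with a convenient decomposition $H_2 = D \cdot C$, where $C$ centralizes all of $H^\circ$. The conjugation action $H \to \Aut(Z^\circ) \cong \GL_k(\Z)$ (for $k = \dim Z^\circ \le n$) factors through the finite group $H/H^\circ$ because $Z^\circ$ is central in $H^\circ$; by Minkowski's theorem its image injects into $\GL_k(\mathbb{F}_3)$, so has order bounded by a function of $n$. Let $H_1$ be its kernel. The composition $H_1 \to \Aut(D) \to \Out(D)$ has image in a finite group of order controlled by the Dynkin data of $D$, which is bounded in terms of $\dim D \le n^2$; let $H_2$ be its kernel. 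Each $h \in H_2$ then acts on $D$ as conjugation by some $d_h \in D$, so $h d_h^{-1}$ centralizes $D$, yielding $H_2 = D \cdot C$ with $C := C_{H_2}(D)$. Since $C \sbe H_1$ also centralizes $Z^\circ$, $C$ centralizes $H^\circ$, and a direct check gives $C^\circ = Z^\circ$.

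The crucial step is to replace $C$ by a bounded-index abelian subgroup $C'$. Since $C^\circ = Z^\circ$ is a torus, $C$ contains no non-trivial unipotent elements (unipotents of an algebraic group lie in the identity component, and tori have none), so every element of $C$ is semisimple. For $c \in C$, if $e$ is the order of $c Z^\circ$ in $C/Z^\circ$ then $c^e \in Z^\circ$; divisibility of the torus $Z^\circ$ yields $t \in Z^\circ$ with $t^e = c^e$, and since $c$ centralizes $Z^\circ$ we have $(c t^{-1})^e = c^e t^{-e} = 1$. Thus every coset of $Z^\circ$ in $C$ contains a finite-order element; choose lifts $c_1, \ldots, c_r$ for the $r = [C : Z^\circ]$ cosets and let $F := \langle c_1, \ldots, c_r \rangle \sbe \GL_n$. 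Then $F$ is a finitely generated torsion subgroup of $\GL_n$, hence finite by Selberg's lemma, and classical Jordan applied to $F$ produces an abelian normal subgroup $F' \triangleleft F$ of index bounded by a function of $n$. Setting $C' := Z^\circ \cdot F'$: this is abelian (since $F'$ commutes with the central torus $Z^\circ$) and $[C:C'] \le [F:F']$ is again bounded in $n$.

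Finally, $H' := D \cdot C'$ satisfies $C' \sbe Z(H')$ (as $C'$ is abelian and centralizes $D$), so $H' = D C' \sbe D \cdot Z(H')$ and hence $H' = Z(H') \cdot [H^\circ, H^\circ]$. The total index $[H:H']$ is the product of the bounded indices from each stage, yielding the constant $J(n)$. The main obstacle of the argument is the non-finite nature of $C$, which blocks a direct appeal to Jordan; the trick is the finite-order lifting, which uses crucially that every element of $C$ is semisimple (from $C^\circ$ being a torus) together with the classical fact that finitely generated linear torsion subgroups are finite. The other steps reduce to Minkowski on $\GL_k(\Z)$, finiteness of $\Out$ of a bounded-dimensional semisimple group, and classical Jordan.
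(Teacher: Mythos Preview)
There is a gap at the key step: you assert that $F = \langle c_1, \ldots, c_r \rangle$ is torsion and then invoke ``Selberg's lemma'' to conclude it is finite. Neither part is right as stated. Torsion generators do not make a group torsion (two involutions can generate an infinite dihedral group), and the statement you want is Schur's theorem that finitely generated periodic linear groups are finite, not Selberg's lemma. The argument can be repaired: since $Z^\circ$ is central in $C$ (you arranged this when passing to $H_1$), the subgroup $F \cap Z^\circ$ is central in $F$, and $F/(F\cap Z^\circ)$ embeds in the finite group $C/Z^\circ$. Hence $F/Z(F)$ is finite, so by Schur's theorem $[F,F]$ is finite; and $F/[F,F]$ is a finitely generated abelian group generated by the torsion images of the $c_i$, hence also finite. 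Thus $F$ is finite and the rest of your argument goes through. (Incidentally, your observation that every element of $C$ is semisimple, while correct in characteristic zero, plays no role: the lifting $(ct^{-1})^e = 1$ uses only that $Z^\circ$ is divisible and central in $C$.)

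For comparison, the paper diverges from your argument exactly at this point. Rather than working inside $C_{H_2}(D)$ and manufacturing a finite subgroup via torsion lifts, it computes the ambient centralizer $Z_{\GL_n}(D) \cong \GL_{a_1}\times\cdots\times\GL_{a_k}$ from the isotypic decomposition of the $D$-module $\C^n$, and then applies a tailored ``strong Jordan'' lemma (Lemma~\ref{strong-Jordan}) to the finite group $H/H^\circ$ sitting inside this product modulo a central subgroup. Your route, once patched with the Schur argument above, is somewhat more elementary in that it avoids the auxiliary Lemma~\ref{strong-Jordan} and uses only classical Jordan, at the cost of the extra lifting step and the appeal to Schur's commutator theorem.
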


\noindent The paper is organized as follows. In \S \ref{term} we introduce some terminology and notation that shall be adopted throughout the paper. In \S \ref{sem-sim} we prove a lemma on the finiteness of number of maxmal proper closed subgroups of a reductive group; this lemma will be used in \S \ref{word-maps}.  In \S \ref{word-maps} we consider word maps on semisimple algebraic groups and prove Proposition \ref{intro-maximal-avoidance} that is useful in establishing a certain uniform criterion on the subgroups of reductive groups. In \S \ref{jordan} we provide a proof of Proposition \ref{intro-larsen}; the statement and the proof is due to Michael Larsen. In \S \ref{deformation-point} we begin the analysis of the representation variety $X_{\Gamma,SO(p,q)}$ and prove Theorem \ref{intro-neigh}. In \S \ref{main-thm} we prove the main result. In \S \ref{final-rmk} we comment on the possibility of extending these results to include other types of noncompact nonsplit real algebraic groups.

\section*{Acknowledgement}

It is a great pleasure to acknowledge my adviser, Michael Larsen, for many conversations during the course of this work. Also, I would like to thank him for the invaluable comments on the exposition of the work.

\section{Terminology and Notation}\label{term}

\tab All $Fuchsian$ groups in this paper are assumed to be cocompact and oriented. A \textit{variety} is an affine scheme of finite type over $\R$. Points are $\R$-points and nonsingular points should be understood scheme-theoretically. For a subset $Y$ of an affine algebraic subvariety $X$, we mean by $\overline{Y}$ the Zariski-closure of $Y$ in $X$.

An \textit{algebraic group} will mean a linear algebraic group over $\R$. The identity component of an algebraic group $G$ is denoted by $G^\circ$.  The semisimple rank of a subgroup $H$ in $G$ is denoted by $\rank_{\ssim} H$. The derived group of a subgroup $H$ is denoted by $[H,H]$.

We mean by $\R$-open subset, the open subset considered in the real topology. Unless otherwise stated, all topological notions should be understood in the Zariski topology. Of course, the notion of compactness should be understood in the real topology.

We introduce some terminology that shall be \textit{extensively} used throughout the paper. Let $V$ be a representation of $G$. Let $\rho : \Gamma \to G$ be a representation of $\Gamma$ in $G$; then $V$ is also a representation of $\Gamma$ via $\rho$. Informally speaking, a representation is said to have some property if the Zariski closure of its image has that property. 

We say that $\rho$ is \textit{dense} in $G$ if $\rho(\Gamma)$ is Zariski dense in $G$, and say that $\rho$ \textit{acts irreducibly} on $V$ if $\overline{\rho(\Gamma)}$ acts irreducibly on $V$.  Similarly, $\rho$ is said to be conjugate to a subgroup $H$ of $G$ if $\overline{\rho(\Gamma)}$ is conjugate to $H$. The \textit{rank of a representation $\rho$}, denoted $\rank \rho$, is defined to be the reductive rank of the Zariski closure of the image of $\rho$. The \textit{semisimple rank of a representation $\rho$}, denoted $\rank_{\ssim} \rho$, is defined to be the semisimple rank of the Zariski closure of the image of $\rho$.  We say $\rho$ is conjugate to a subgroup $H$ if the Zariski-closure of the image of $\rho(\Gamma)$ is conjugate to $H$.  Note that the symbol $\rho$ is loaded with several meanings, but we believe that the particular meaning will be clear from the context. 

Finally, for a vector space $V$ over $\R$, we mean by $V_\C$ the complexification of $V$ : $V_\C := V \otimes \C$.


\section{Maximal subgroups of semisimple real algebraic groups}\label{sem-sim}

The essential aspect in computing the dimension of the set of homomorphisms $\rho : \Gamma \to G(\R)$ with Zariski-dense image is to establish that there is an open subset (in the real topology) of such homomorphisms. The basic idea is to \textit{sieve} all those representations with image contained in a proper closed subgroup of $G(\R)$. So, it is desirable to have some control over the maximal proper closed subgroups of $G(\R)$; semisimple algebraic groups offer such control, as the following proposition illustrates:

\begin{prop}
\label{maximals}
Let $G$ be an almost simple real algebraic group.  There exists a finite set $\{H_1,\ldots,H_k\}$ of proper closed subgroups of $G$ such that every proper closed subgroup
is contained in some group of the form $g H_i g^{-1}$, where $g\in G(\R)$.
\end{prop}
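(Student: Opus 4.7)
The plan is to reduce the statement to showing that the maximal proper closed $\R$-subgroups of $G$ fall into finitely many $G(\R)$-conjugacy classes. Any proper closed $\R$-subgroup $H$ lies in a maximal one: arguing with an infinite ascending chain $H \subsetneq H_1 \subsetneq H_2 \subsetneq \cdots$ of proper closed subgroups, the dimensions $\dim H_i$ are bounded integers, so they stabilise; within a fixed dimension the identity components $H_i^\circ$ stabilise to some connected closed subgroup $K$, whence $H_i \subseteq N_G(K)$. Since $G$ is almost simple, $K$ cannot be both connected of positive dimension and normal in $G$, so either $N_G(K)$ is a proper overgroup (letting us repeat the argument inside $N_G(K)$ of strictly smaller dimension) or $K$ is trivial, in which case each $H_i$ is finite and Jordan's theorem controls the situation.

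To classify the maximal proper closed $\R$-subgroups $M$, I would invoke the Borel--Tits theorem to split into parabolic and reductive cases. If the unipotent radical $R_u(M_\C)$ is nontrivial, then $N_{G_\C}(R_u(M_\C))$ is a proper parabolic of $G_\C$ containing $M_\C$, so by maximality $M$ itself is a parabolic $\R$-subgroup. Parabolic $\R$-subgroups are in bijection with subsets of the simple roots of the relative root system of $G$, which is finite, producing only finitely many $G(\R)$-conjugacy classes.

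In the remaining case $M_\C$ is reductive. Passing to $G_\C$, I would invoke Dynkin's classical theorem: the maximal connected proper reductive subgroups of a complex simple algebraic group fall into finitely many $G(\C)$-conjugacy classes. For non-connected $M$ one has $M = N_G(M^\circ)$ outside the degenerate case $M^\circ \subseteq Z(G)$, which forces $M$ to be finite; Jordan's theorem (together with an order bound coming from the ambient $\GL_n$) then gives finitely many conjugacy classes of finite maximal subgroups. This leaves finitely many $G(\C)$-conjugacy classes for $M_\C$.

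Finally, I would descend from $\C$ to $\R$ by Galois cohomology: within a fixed $G(\C)$-conjugacy class, the $G(\R)$-conjugacy classes of $\R$-subgroups $M'$ with $M'_\C$ conjugate to $M_\C$ inject into the pointed set $H^1(\mathrm{Gal}(\C/\R), N_G(M))$, which is finite by the Borel--Serre finiteness theorem for Galois cohomology of linear algebraic $\R$-groups. The main obstacle is the appeal to Dynkin's classification of maximal subgroups of complex simple algebraic groups, together with the somewhat fussy reductions needed to handle non-connected and finite maximal subgroups.
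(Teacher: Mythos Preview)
The paper gives no proof here; it simply cites \cite[Proposition~3.2]{LL}. Your outline---the Borel--Tits dichotomy into parabolic versus reductive maximal subgroups, Dynkin's classification over $\C$ for the reductive case, and descent to $\R$ via Borel--Serre finiteness of $H^1$---is the standard route and is sound in overall strategy.

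Your first reduction has a real gap, though. Infinite strictly ascending chains of proper closed subgroups \emph{do} exist in almost simple $G$ (e.g.\ $\mu_2\subsetneq\mu_4\subsetneq\cdots$ inside a maximal torus), so the chain argument cannot terminate as written: once the identity components stabilise at $K$, the tail already sits in $N_G(K)$ with $H_i^\circ=K$, and ``repeating inside $N_G(K)$'' gains nothing; when $K$ is trivial, Jordan's theorem bounds only the index of an abelian normal subgroup, not $|H_i|$, and there is no general ``order bound coming from the ambient $\GL_n$'' on finite subgroups of $\GL_n(\C)$. The repair is uniform. If a finite closed $M\subset G$ lies in no positive-dimensional proper closed subgroup, Jordan's abelian normal $A\trianglelefteq M$ consists of commuting semisimple elements, hence lies in a torus $T$, so $N_G(A)\supseteq T$ is positive-dimensional; the hypothesis on $M$ then forces $N_G(A)=G$, i.e.\ $A\subseteq Z(G)$, giving $|M|\le J(n)\,|Z(G)|$. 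For existence of a maximal above a given proper closed $H$, let $d$ be the largest dimension of a proper closed overgroup of $H$: if $d>0$, pick such a $K_0$ and observe that $N_G(K_0^\circ)$ is proper, contains $H$, has dimension exactly $d$ (hence $N_G(K_0^\circ)^\circ=K_0^\circ$ and finitely many components), and is easily checked to be maximal; if $d=0$, every proper closed overgroup of $H$ is finite and lies in no positive-dimensional proper closed subgroup, hence has order $\le J(n)\,|Z(G)|$ by the previous sentence, so a maximal exists among finitely many candidates. With this patch in place, the rest of your outline goes through.
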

\begin{proof}
We refer the reader to \cite[Proposition 3.2]{LL}.
\end{proof}

\begin{lem}\label{finite-semisimple}
Let $G$ be a semisimple real algebraic group and let $V$ be a faithful irreducible representation of $G$. Then, there exists a finite set $\{ H_1, \ldots, H_n \}$ of proper semisimple subgroups of $G$, such that $H_i$ acts irreducibly on $V$ for each $i$, and every proper semisimple subgroup that acts irreducibly on $V$ is contained in some subgroup of the form $gH_i g^{-1}$, where $g \in G$. 
\end{lem}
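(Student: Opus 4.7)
Plan: The strategy is to reduce the problem to the finiteness of $G$-conjugacy classes of maximal proper closed subgroups of $G$. For a given proper semisimple subgroup $H \subset G$ acting irreducibly on $V$, I choose a maximal proper closed subgroup $M$ of $G$ containing $H$ (which exists by finiteness of dimension). The claim I want to establish is that $H$ is automatically contained in $[M,M]$, and that $[M,M]$ is itself a proper semisimple subgroup of $G$ acting irreducibly on $V$; the desired finite list $\{H_1,\ldots,H_n\}$ will then consist of derived subgroups of maximal proper closed subgroups of $G$ running over a finite set of conjugacy class representatives.

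The main technical verification is that any such $M$ must be reductive. If the unipotent radical $U = R_u(M)$ were nontrivial, then the fixed subspace $V^U$ would be a proper nontrivial $M$-invariant subspace of $V$: it is nontrivial by the Lie--Kolchin theorem applied to the unipotent group $U$, it is proper because $G \hookrightarrow GL(V)$ is faithful and $U \neq 1$, and it is $M$-invariant because $U$ is normal in $M$. This contradicts the irreducibility of $H \subset M$ on $V$. Hence $M$ is reductive and decomposes as $M = Z(M)^\circ \cdot [M,M]$ with $[M,M]$ semisimple. Since $H$ is perfect, $H = [H,H] \subset [M,M]$. Moreover, $[M,M]$ acts irreducibly on $V$ because any $[M,M]$-invariant subspace is a fortiori $H$-invariant, hence is $0$ or $V$. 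Finally $[M,M] \subset M \subsetneq G$, so $[M,M]$ is a proper subgroup of $G$.

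It remains to invoke the finiteness of $G$-conjugacy classes of maximal proper closed subgroups of the semisimple group $G$. For almost-simple $G$ this is precisely Proposition \ref{maximals}; the extension to semisimple $G$ follows by writing $G$ as an almost-direct product of its almost-simple factors $G_1,\ldots,G_r$ and describing the maximal proper closed subgroups (they are either of \emph{product type}, of the form $(\prod_{j \neq i}G_j)\cdot N_i$ with $N_i$ maximal in $G_i$, or of \emph{diagonal type}, arising from isomorphisms between factors). Choosing conjugacy-class representatives $M_1,\ldots,M_k$ and letting $\{H_1,\ldots,H_n\}$ be the subcollection $\{\,[M_i,M_i] : M_i \text{ is reductive and } [M_i,M_i] \text{ acts irreducibly on } V\,\}$ then yields the desired finite set. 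The main obstacle in this plan is the extension of Proposition \ref{maximals} from almost-simple to semisimple $G$; this reduction is essentially combinatorial given the structure of semisimple groups as almost-direct products, but requires some care with the diagonal-type maximal subgroups when the factors admit isomorphisms.
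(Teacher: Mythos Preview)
Your proposal is correct and follows essentially the same line as the paper's proof: contain $H$ in a maximal proper closed subgroup $M$, observe that $M$ must be reductive because it acts irreducibly on $V$, and then use that a semisimple (hence perfect) $H$ lands in the derived subgroup $[M^\circ,M^\circ]$, which is proper semisimple and still irreducible on $V$. The paper phrases the conclusion as ``the lemma follows by induction on dimension,'' whereas you conclude in one step by taking the $H_i$ to be the derived subgroups of conjugacy-class representatives of maximal subgroups; your organization is slightly cleaner, but the content is the same.

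Two small points worth tightening. First, since $M$ need not be connected, write $[M^\circ,M^\circ]$ rather than $[M,M]$ throughout (this is what the paper does); your decomposition $M = Z(M)^\circ\cdot [M,M]$ is only valid for $M^\circ$. Second, you correctly flag that Proposition~\ref{maximals} is stated only for almost-simple $G$ and that the semisimple case requires the product/diagonal analysis you sketch; the paper's proof silently assumes this extension as well, so your explicit treatment is an improvement in rigor, not a divergence in method.
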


\begin{proof}
Let $\{ K_1, \ldots, K_n \}$ represent the set of conjugacy classes of maximal proper closed subgroups of $G$. Let $H$ be a proper semisimple subgroup of $G$ and suppose $H$ acts irreducibly on $V$. Then $H$ is contained in some subgroup of the form $g K_i g^{-1}$, where $1 \leq i \leq n$ and $g \in G$. It is well-known that a linear algebraic group with a faithful irreducible representation is reductive, so $K_i$ cannot be parabolic, whence it must be reductive \cite[Theorem 30.4 (a)]{Hu}.  As $H$ is semisimple, it is contained in the semisimple subgroup $[K_i^\circ, K_i^\circ]$ of $K_i$ which is of strictly smaller dimension than $\dim G$. Now, the lemma follows by induction on dimension.
\end{proof}

\section{Word maps on semisimple algebraic groups}\label{word-maps} Let $F$ be a free group on $m$ letters $X_1, \ldots,X_m$, and let $w = w(X_1,\ldots,X_m)$ ($m \geq 2$) be a reduced word in the $X_i$'s with non-zero  exponents. Then, given an algebraic group $G$, the word $w$ defines a map $f_w : G^m \to G$, by the rule
$$
f_w( g_1, \ldots, g_m ) = w(g_1,\ldots,g_m).
$$

\begin{thm}\label{borel}
(Borel): The morphism $f_w : G^m \to G$ is dominant. 
\end{thm}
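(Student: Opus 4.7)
The strategy is the original argument of Borel. First I would reduce to the case where $G$ is connected and almost simple: the word map $(G^\circ)^m \to G^\circ$ factors through the identity component, and if $G$ is an almost direct product of almost simple factors then $f_w$ splits componentwise, so dominance on each factor implies dominance on the product. Since $G^m$ and $G$ are irreducible varieties in characteristic zero, it then suffices to exhibit a single point $p \in G^m$ at which the differential $df_w(p) : T_p G^m \to T_{f_w(p)} G$ is surjective, because then $f_w(G^m)$ contains a Zariski open subset of $G$ by the submersion theorem for morphisms between smooth irreducible varieties.

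Fix a maximal torus $T \subseteq G$ with Lie algebra $\t$, and the root space decomposition $\g = \t \oplus \bigoplus_{\alpha \in \Phi} \g_\alpha$. Choose a point $p = (t_1, \ldots, t_m) \in T^m$ with each $t_i$ regular and with the characters $\alpha(t_i)$ algebraically independent as $\alpha$ and $i$ vary; since $T$ is commutative, $f_w(p)$ lies in $T$, and after left-translating by $f_w(p)^{-1}$ we reduce to studying a linear map $\g^m \to \g$. For a fixed root $\alpha$, a direct computation using the relation $t_j X_\alpha t_j^{-1} = \alpha(t_j) X_\alpha$ for $X_\alpha \in \g_\alpha$ shows that the image of $(0, \ldots, X_\alpha, \ldots, 0)$ (with $X_\alpha$ in the $i$-th slot) lies in $\g_\alpha$ and equals $P_{w, \alpha, i}(\alpha(t_1), \ldots, \alpha(t_m)) \cdot X_\alpha$ for a nonzero Laurent polynomial $P_{w, \alpha, i}$ determined by the occurrences of $X_i$ in $w$; nonvanishing of $P_{w, \alpha, i}$ uses crucially that the exponents of $X_i$ in $w$ are nonzero. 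By the genericity of the $t_j$, each such coefficient is nonzero, so the image of $df_w(p)$ contains every root space $\g_\alpha$.

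The main obstacle is the toral part $\t$. If the total exponent of some variable $X_i$ in $w$ is nonzero, then varying $t_i$ within $T$ contributes all of $\t$ and we are done. The delicate case is when every total exponent vanishes, as for commutator-type words; then first-order variation in the toral directions is degenerate. Here I would use that $\g$ is generated as a Lie algebra by its root spaces, hence $\t = \sum_{\alpha} [\g_\alpha, \g_{-\alpha}]$, and translate this algebraic fact into the word map setting by a second-order deformation: having produced directions mapping onto $\g_\alpha$ and $\g_{-\alpha}$, one perturbs the base point along a curve in $T^m$ whose leading correction is the corresponding bracket in $\t$. Equivalently, one can move the base point along a one-parameter subgroup and reduce to the previous case. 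Surjectivity of $df_w$ at this modified point then yields dominance of $f_w$ as indicated in the first paragraph.
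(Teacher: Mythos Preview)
The paper does not supply its own proof here; it simply cites Borel's original article. Your sketch follows the broad shape of a differential argument, but it contains a genuine gap that is not repaired by the second-order fix you propose.

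The problem is the assertion that each $P_{w,\alpha,i}$ is a \emph{nonzero} Laurent polynomial. After right-translating by $f_w(p)^{-1}$, the partial differential of $f_w$ at $p=(t_1,\ldots,t_m)\in T^m$ acts on $\g_\alpha$ (placed in the $i$-th slot) by the image of the Fox derivative $\partial w/\partial X_i$ under the composite $\Z[F_m]\to\Z[\Z^m]\to\C$, the first arrow being abelianisation and the second the specialisation $X_j\mapsto\alpha(t_j)$. Hence $P_{w,\alpha,i}$ is nonzero for generic $t$ precisely when the \emph{abelianised} Fox derivative $\overline{\partial w/\partial X_i}\in\Z[\Z^m]$ is nonzero. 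Now these abelianised derivatives all vanish exactly when $w$ lies in the second derived subgroup $F''=[F',F']$ (with $F'=[F_m,F_m]$); this is the content of the Magnus embedding of the free metabelian group $F_m/F''$. Since $m\ge 2$, there are many nontrivial reduced words in $F''$, for instance $w=\bigl[[X_1,X_2],[X_1,X_2^{\,2}]\bigr]$. For any such $w$ one has $f_w\restriction_{T^m}\equiv e$ and $df_w(p)=0$ at \emph{every} $p\in T^m$: the toral directions die because $f_w$ is constant on $T^m$, and the root directions die because all the $P_{w,\alpha,i}$ vanish identically. No toral base point can exhibit surjectivity of the differential, and your second-order manoeuvre---which explicitly presupposes that the root spaces are already in the image---never gets started.

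Borel's actual argument avoids this trap: he treats $\SL_2$ first by a direct calculation and then leverages the root $\SL_2$'s for general almost simple $G$; in particular the base points that matter do \emph{not} have all coordinates in a common torus. The commutativity that makes your toral computation tractable is precisely what collapses it for words deep in the derived series.
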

\begin{proof}
We refer the reader to \cite{Bo1}.
\end{proof}

\noindent Let $H$ be a closed subgroup of $SL_n(\R)$. Given $h \in H$, let $X^n + a_1(h) X^{n-1}+ $ $ \ldots + a_{n-1}(h) X + (-1)^n = 0 $ be its characteristic polynomial, where $a_1(h), \ldots $, $a_{n-1}(h) \in \C$. Consider the \textit{characteristic morphism} of affine varieties $\chi : H \to \A^{n-1}_{\C}$ defined by
$$
\chi(h) = (a_1(h), \ldots,a_{n-1}(h)),
$$ where $\A_{\C}^{n-1}$ is the affine space $\C^{n-1}$. Clearly, the morphism $\chi$ is constant on conjugacy classes of elements of $H$.  

\begin{prop}\label{Image-rank}
Let $H$ be a semisimple subgroup of $SL_n(\R)$. Then, $\dim \overline{\chi(H)}$ = $\rank H$.
\end{prop}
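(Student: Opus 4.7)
My plan is to reduce the computation of $\dim \overline{\chi(H)}$ to the image of a single maximal torus via Jordan decomposition, and then to compute that image using the weights of the defining representation $V = \C^n$. Throughout I work with the complexifications of $H$ and $V$, so that $\overline{\chi(H)} \subseteq \A^{n-1}_\C$ is the usual Zariski closure, and I first reduce to the case where $H$ is connected, since $\rank H = \rank H^\circ$ and the finitely many remaining cosets can be treated separately (their contribution does not raise the dimension).

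The first step would be to prove $\overline{\chi(H)} = \overline{\chi(T)}$ for any maximal torus $T \subseteq H$. For every $h \in H$, the Jordan decomposition inside the algebraic group $H$ writes $h = su$ with $s$ semisimple and $u$ unipotent commuting; since $u$ preserves each eigenspace of $s$ and acts unipotently on it, $su$ has the eigenvalues of $s$ with the same multiplicities, so $\chi(h) = \chi(s)$. Every semisimple element of the connected group $H$ lies in some maximal torus, all maximal tori of $H$ are $H$-conjugate, and $\chi$ is conjugation invariant; hence $\chi(s) \in \chi(T)$, giving $\chi(H) = \chi(T)$.

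The second step would be to compute $\dim \overline{\chi(T)} = \rank H$ via weights. Faithfulness of $V$ on $H$ forces faithfulness on $T$. If $\lambda_1, \ldots, \lambda_n$ are the weights of $T$ on $V$ listed with multiplicity, the homomorphism $\Phi : T \to (\C^*)^n$, $t \mapsto (\lambda_1(t), \ldots, \lambda_n(t))$, has trivial kernel, so $\Phi(T)$ is a subtorus of $(\C^*)^n$ of dimension $\dim T = \rank H$. The composition $T \xrightarrow{\Phi} (\C^*)^n \xrightarrow{e} \A^{n-1}$, where $e$ returns the first $n-1$ elementary symmetric polynomials, coincides with $\chi|_T$ (the $n$-th elementary symmetric polynomial equals $\det = 1$ on $H \subseteq SL_n$ and carries no information). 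Since the fibres of $e$ are $S_n$-orbits on coordinates and in particular finite, $\chi(T) = e(\Phi(T))$ has the same dimension as $\Phi(T)$, namely $\rank H$.

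The main obstacle I anticipate is the bookkeeping around Jordan decomposition inside $H$ (as opposed to the ambient $SL_n$) and the mild subtlety that $H$ need not be connected; once these are handled, the conclusion is a formal consequence of the conjugacy of maximal tori in a connected semisimple group and the finiteness of the elementary-symmetric-polynomial map.
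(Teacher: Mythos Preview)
Your argument is correct and structurally parallel to the paper's: both reduce to a maximal torus $T$ and then show that $\chi|_T$ has finite fibres. The one substantive difference is in the reduction step. You use Jordan decomposition ($\chi(h)=\chi(h_s)$, then conjugate the semisimple part into $T$) to obtain the exact equality $\chi(H)=\chi(T)$; the paper instead invokes the density of regular semisimple elements in $H$ to get the chain $\chi(T)\subseteq\chi(H)=\chi(\overline{U})\subseteq\overline{\chi(U)}\subseteq\overline{\chi(T)}$, hence only equality of closures. Your route is a little more direct and avoids citing the regular-semisimple density theorem. For the fibre count the two arguments coincide: the paper embeds $T$ in the diagonal torus of $SL_n(\C)$ and observes that a $\chi$-fibre inside $T$ consists of at most $n!$ diagonal matrices differing by a permutation of entries, which is precisely your elementary-symmetric-polynomial map in coordinates. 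One small precision worth adding: the fibres of $e:(\C^*)^n\to\A^{n-1}_\C$ given by $(e_1,\ldots,e_{n-1})$ are \emph{not} $S_n$-orbits on all of $(\C^*)^n$ (they are one-dimensional, parametrised by $e_n$); your claim becomes correct only after restricting to the hypersurface $e_n=1$, which you do note holds on $\Phi(T)$ since $T\subseteq SL_n$.
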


\begin{proof}
The dimension of a variety is invariant under the base extension from $\R$ to $\C$, so without loss of generality we may consider $H(\C)$ in $SL_n(\C)$. It is well-known that any maximal torus $T$ of $H$ extends to a maximal torus of $SL_n(\C)$, i.e. $T$ is a subtorus of some maximal torus of $SL_n(\C)$. Furthermore, since any two maximal tori of $SL_n(\C)$ are conjugate in $SL_n(\C)$, we may further assume that $T$ is a subtorus of the maximal torus in $SL_n(\C)$ consisting of diagonal matrices.

The set of regular semisimple elements of $H$ contains a nonempty open subset $U$ of $H$ \cite[Theorem 12.3]{Bo}. Since any regular semisimple element is conjugate to an element of $T$, we have the following inclusions:
$$
\chi(T) \subseteq \chi(H) = \chi(\overline{U}) \subseteq \overline{\chi(U)} \subseteq \overline{\chi(T)}.
$$
Hence, without loss of generality we may consider $\chi : T \to \overline{\chi(T)}$. Since $\chi$ is dominant, there exists a nonempty open subset $W$ of $\overline{\chi(T)}$ contained in $\chi(T)$ such that for each $y \in W$,
$$
\dim  \chi^{-1}(y) =  \dim T - \dim \overline{\chi(T)}.
$$
The fiber over $y$ is a conjugacy class in $T$ represented by a diagonal matrix. Since any conjugate of a diagonal matrix is obtained by permuting the diagonal entries, it follows that the fiber over $y \in W$ consists of at most $n!$ elements, whence dim $\chi^{-1}(y) = 0 $ for each $y \in W$. The proposition follows from this, since $\dim W = \dim \overline{\chi(T)}$.
\end{proof}

\begin{prop}\label{Maximal-avoidance}\emph{(Maximal subgroup avoidance)} Let $\Gamma$ be a finitely generated group. Let $G$ be a semisimple subgroup of the special linear group $SL_n(\R)$. Let $K$ be a semisimple subgroup of $G$ and let $\rho_0 : \Gamma \to G$ be a representation of $\Gamma$ such that $\overline{\rho_0(\Gamma)} = K$. Let $\{ H_1, \ldots, H_r \}$ represent a finite set of conjugacy classes of proper reductive subgroups of $G$.  Suppose $\rank  H_i < \rank  K$ for all $1\leq i \leq r$. Then there exists $\gamma \in \Gamma$ such that $\rho_0(\gamma) \notin gH_ig^{-1}$ for all $g \in G$ and $1 \leq i \leq r$. 
\end{prop}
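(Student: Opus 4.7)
The plan is to leverage the characteristic morphism $\chi \colon SL_n \to \A^{n-1}_\C$ and Proposition \ref{Image-rank} to separate $\rho_0(\gamma)$ from the conjugates of each $H_i$ through conjugation-invariant data. The crucial observation is that $\chi$ is constant on $SL_n$-conjugacy classes, so $\chi(gH_ig^{-1}) = \chi(H_i)$ for every $g \in G$. Consequently, to exhibit a $\gamma \in \Gamma$ with $\rho_0(\gamma) \notin gH_ig^{-1}$ for every $g \in G$ and every $i$, it suffices to find $\gamma$ with
\[
\chi(\rho_0(\gamma)) \notin \bigcup_{i=1}^{r} \overline{\chi(H_i)}.
\]

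Next I would compare dimensions inside $\A^{n-1}_\C$. By Proposition \ref{Image-rank}, $\dim \overline{\chi(K)} = \rank K$; the same proof, with ``reductive'' in place of ``semisimple'' (regular semisimple elements are still open and dense in $H_i$, and their characteristic polynomials are determined by a maximal torus of $H_i$ up to finitely many permutations), gives $\dim \overline{\chi(H_i)} = \rank H_i$ for each reductive $H_i$. By hypothesis $\rank H_i < \rank K$, so each $\overline{\chi(H_i)} \cap \overline{\chi(K)}$ has dimension strictly less than $\dim \overline{\chi(K)}$. Picking an irreducible component $Z$ of $\overline{\chi(K)}$ of dimension $\rank K$, one gets $Z \not\subseteq \overline{\chi(H_i)}$ for any $i$, and by irreducibility $Z \not\subseteq \bigcup_{i=1}^r \overline{\chi(H_i)}$; hence the union $\bigcup_{i=1}^r \overline{\chi(H_i)} \cap \overline{\chi(K)}$ is a proper closed subvariety of $\overline{\chi(K)}$.

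Finally, I would use the density hypothesis: since $\overline{\rho_0(\Gamma)} = K$ and $\chi$ is a morphism of varieties, one has $\overline{\chi(\rho_0(\Gamma))} \supseteq \chi(\overline{\rho_0(\Gamma)}) = \chi(K)$, and therefore $\overline{\chi(\rho_0(\Gamma))} = \overline{\chi(K)}$. Because a Zariski dense subset cannot lie inside any proper closed subvariety, $\chi(\rho_0(\Gamma))$ must meet the complement of $\bigcup_i \overline{\chi(H_i)}$ in $\overline{\chi(K)}$, producing the desired $\gamma$. Combined with the conjugation-invariance observation from the first paragraph, this element satisfies $\rho_0(\gamma) \notin gH_ig^{-1}$ for all $g \in G$ and all $i$.

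The only genuine subtlety I expect is the dimension calculation for $\overline{\chi(H_i)}$ when $H_i$ is merely reductive (not semisimple), and the bookkeeping needed when $\overline{\chi(K)}$ is reducible; both are handled by restricting attention to a top-dimensional irreducible component of $\overline{\chi(K)}$, so I do not anticipate any serious obstacle.
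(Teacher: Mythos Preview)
Your argument is correct and takes a more direct route than the paper's. The paper passes to the connected semisimple parts $S_i:=[H_i^\circ,H_i^\circ]$ in order to apply Proposition~\ref{Image-rank} as stated, and then invokes Borel's dominance theorem (Theorem~\ref{borel}) for the word map $f_m(x,y)=[x^m,y^m]$, with $m$ the lcm of the $|H_i/H_i^\circ|$, to produce a $\gamma=[\alpha^m,\beta^m]\in\Gamma$ with $\chi(\rho_0(\gamma))$ lying outside $\bigcup_i\overline{\chi(S_i)}$. You instead work with $\overline{\chi(H_i)}$ directly, using only that $\chi$ is conjugation-invariant and that $\rho_0(\Gamma)$ is Zariski dense in $K$; the word-map machinery is not needed at all for the statement as given. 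Your approach is shorter and conceptually cleaner. (Incidentally, since $K$ is connected semisimple, $\overline{\chi(K)}=\overline{\chi(T_K)}$ is already irreducible by the proof of Proposition~\ref{Image-rank}, so your caution about restricting to a top-dimensional component is unnecessary.)

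One point to tighten: your parenthetical extension of Proposition~\ref{Image-rank} to reductive $H_i$ (``regular semisimple elements are still open and dense, and their characteristic polynomials are determined by a maximal torus up to finitely many permutations'') is valid only when $H_i$ is \emph{connected}. A semisimple element lying in a non-identity component of a disconnected reductive group need not be conjugate in $H_i$ to any element of a maximal torus of $H_i^\circ$; for instance, a generic element of $O(2n)\setminus SO(2n)$ has centralizer in $SO(2n)$ of dimension strictly less than $\rank SO(2n)$. The inequality $\dim\overline{\chi(H_i)}\le\rank H_i$, which is all you actually use, does still hold, but one must argue on each component $C=h_0H_i^\circ$ separately: $\chi|_C$ is constant on $H_i^\circ$-conjugacy classes, and the generic such class has codimension at most $\rank H_i^\circ$ in $C$ (equivalently, semisimple classes in $C$ are parametrized by a quotient of a $\sigma$-stable maximal torus, where $\sigma$ is the automorphism of $H_i^\circ$ induced by $h_0$). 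This is a routine patch and does not affect your overall strategy.
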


\begin{proof}
By Lemma \ref{Image-rank}, the dimension of image of the semisimple subgroup $S_i := [H_i^\circ,H_i^\circ]$ with respect to $\chi$ is equal to the rank of its maximal torus. So, by hypothesis on the ranks of $H_i$ and $K$, it follows that the dimension of $\mathcal{S} := \bigcup_{i=1}^{r} \overline{\chi(S_i)}$ is strictly less than the dimension of $\mathcal{T} := \overline{\chi(K)}$.

For each $1 \leq i \leq r$, let  $m_i$ be the number of connected components of $H_i$, and let $m$ be their least common multiple. Then, for each $i$ ($1 \leq i \leq r$), and each $x_i , y_i \in H_i$, the commutator $[x_i^m,y_i^m] \in [H_i^\circ,H_i^\circ]$. Consider the following composition of morphisms:
\[
 \Gamma \times \Gamma \xrightarrow{\rho_0 \times \rho_0} G \times G  \xrightarrow{f_m}  G \xrightarrow{\chi}  \A^{n-1}
\]
where $f_m$ is defined by $f_m(x,y) = x^m y^m x^{-m} y^{-m}$. We claim that the composite morphism $\chi \circ f_m \circ (\rho_0 \times \rho_0)$ has dense image in $\mathcal{T}$. Indeed, as $\rho_0(\Gamma)$ is dense in $K$, $(\rho_0 \times \rho_0)(\Gamma \times \Gamma)$ is dense in $K \times K$. By Theorem \ref{borel}, the morphism $f_{m} \mid_{K \times K}$ is dominant, so there exists a dense open subset $U$ in $K$ contained in $f_m(K \times K)$. Since $\chi(K)$ is also also dense in $\mathcal{T}$, it follows that the image of the composite morphism $ \chi \circ f_m \circ (\rho_0 \times \rho_0)$ is dense in $\mathcal{T}$. 

On the other hand, since $\dim \mathcal{S} < \dim \mathcal{T}$, there exists $z \in \mathcal{T} \setminus \mathcal{S}$, such that $ z = \chi(\gamma)= \chi([x^m,y^m])$ and $[x^m,y^m] \in U$,  where $x  = \rho_0(\alpha), y = \rho_0(\beta)$ for some $\alpha,\beta \in \Gamma$. We claim that the element $\gamma \in \Gamma$ does not belong to $g H_i g^{-1}$ for each $i$ and for each $g \in G$. Indeed, if $\rho_0(\gamma) \in g H_i g^{-1}$ for some $i$ and for some $g \in G$, then $f_m  \circ (\rho_0 \times \rho_0) (\alpha, \beta) = \rho_0(\gamma) \in gH_i g^{-1}$, and therefore its image under $\chi$ belongs to $\mathcal{S}$, contrary to what is established: $ z =\chi \circ \rho_0(\gamma)  \in  \mathcal{T} \setminus \mathcal{S}$. 
\end{proof}

\section{A generalisation of a theorem of Jordan}\label{jordan}

The following proposition is a generalisation to reductive subgroups of Jordan's theorem on finite subgroups of $GL_n(\C)$. All algebraic groups will be defined over $\C$. We refer to \cite{La} for a proof for compact groups. 

\begin{lem}
\label{strong-Jordan}
Let $a_1,\ldots,a_k$ be positive integers, $G = \GL_{a_1}\times\cdots\times \GL_{a_k}$.
Let $C$ denote  a closed subgroup of the center of $G$, $\bar G$ the quotient group $G/C$, and
$K$ a finite subgroup of $\bar G(\C)$.  Then there exists a subgroup $K'$ of $K$ with $|K/K'|$
bounded above by a bound depending only on the $a_i$, such that the inverse image of $K'$ in 
$G$ is commutative.
\end{lem}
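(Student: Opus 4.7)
The plan is to use classical Jordan's theorem to pass to a subgroup $K_1 \sbe K$ of bounded index whose preimage $\tilde K_1 \sbe G$ is nilpotent of class at most $2$, and then to exploit the Mackey--Heisenberg dimension formula to cut down further to a subgroup $K' \sbe K_1$ whose preimage in $G$ is actually commutative.

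For the first step I would apply Jordan's theorem to the image $\Ad(K) \sbe \GL(\g)$ under the adjoint representation $\Ad\colon \bar G \to \GL(\g)$, where $\g = \operatorname{Lie}(\bar G)$ has dimension at most $\sum_i a_i^2$. Jordan supplies an abelian normal subgroup $A \sbe \Ad(K)$ of index bounded by a Jordan constant $J_1 = J(\sum_i a_i^2)$; its preimage $K_1 \sbe K$ has $[K : K_1] \le J_1$, and since $\ker(\Ad) = Z(\bar G)$ (recall $\bar G$ is connected), $[K_1, K_1] \sbe K \cap Z(\bar G)$, a finite central subgroup of $\bar G$. Using that the preimage in $G$ of $Z(\bar G)$ is $Z(G)$ (again because $G$ is connected), we deduce
\[
[\tilde K_1, \tilde K_1] \sbe [G,G] \cap Z(G) \sbe \mu_{a_1} \times \cdots \times \mu_{a_k},
\]
a finite group of order at most $\prod a_i$; in particular $\tilde K_1$ is nilpotent of class at most $2$.

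Since $\tilde K_1$ is class-$2$ nilpotent, the commutator descends to a well-defined bi-additive alternating pairing $c\colon K_1 \times K_1 \to [\tilde K_1, \tilde K_1]$ sending $(k_1, k_2)$ to $[\tilde k_1, \tilde k_2]$ for any lifts (independent of lifts because $C$ is central). For each $i$, let $\chi_i$ denote the restriction to $[\tilde K_1, \tilde K_1]$ of the $i$-th coordinate character $Z(G) = \G_m^k \to \G_m$, and observe that $\chi_1, \ldots, \chi_k$ jointly separate points of $[\tilde K_1, \tilde K_1]$. Define
\[
R_i := \{ k \in K_1 : \chi_i(c(k, k')) = 1 \text{ for all } k' \in K_1 \}.
\]
Because $[\tilde K_1, \tilde K_1] \sbe Z(G)$ acts on $\C^{a_i}$ through the single scalar character $\chi_i$, every irreducible $\tilde K_1$-constituent of $\C^{a_i}$ has central character $\chi_i$ on $[\tilde K_1, \tilde K_1]$, and the Mackey--Heisenberg dimension formula for class-$2$ nilpotent groups identifies the dimension of such a constituent with $\sqrt{[K_1 : R_i]}$. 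Since this dimension is at most $a_i$, I conclude $[K_1 : R_i] \le a_i^2$.

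Taking $K' := \bigcap_i R_i$ gives $[K_1 : K'] \le \prod_i a_i^2$, and for any $k \in K'$ and $k' \in K_1$ the separation property forces $c(k, k') = 1$ in $G$, so the preimage $\tilde K'$ centralises $\tilde K_1$ and is in particular commutative. Combined with the Jordan step, $[K : K'] \le J_1 \cdot \prod a_i^2$, a bound depending only on the $a_i$. The main technical delicacy is justifying the Mackey--Heisenberg dimension formula for the possibly positive-dimensional group $\tilde K_1$; but its identity component is the torus $C^\circ$, which acts by a single character on each irreducible constituent, so $\tilde K_1$ is reductive, Schur's lemma applies to its finite-dimensional representations, and the formula reduces to the standard one on the finite quotient $\tilde K_1 / C^\circ$ twisted by a character of $C^\circ$.
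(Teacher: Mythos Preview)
Your argument is correct, and the overall architecture matches the paper's: both begin by applying Jordan's theorem to $K$ via the adjoint embedding of $\bar G$ into $\GL_N$ (with $N\le\sum a_i^2$), and both then exploit the fact that commutators of lifts land in $Z(G)\cap[G,G]=\mu_{a_1}\times\cdots\times\mu_{a_k}$. The second step, however, is genuinely different.

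The paper, having reduced to $K$ abelian, argues elementarily: if $u,v\in\PGL_{a_i}(\C)$ commute, any lifts satisfy $\tilde u\tilde v\tilde u^{-1}=\zeta\tilde v$ with $\zeta^{a_i}=1$ (take determinants), so $\tilde u^{a_i}$ commutes with $\tilde v$. Setting $d=\mathrm{lcm}(a_1,\dots,a_k)$ and $K'=\{h^d:h\in K\}$, one checks $[K:K']=|K[d]|\le d^N$ and that all lifts of elements of $K'$ commute. This avoids any representation theory beyond Jordan.

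Your route replaces this with the projective/Heisenberg dimension formula: the nondegenerate alternating bicharacter $\chi_i\circ c$ on $K_1/R_i$ forces every irreducible $\tilde K_1$-constituent of $\C^{a_i}$ to have dimension exactly $\sqrt{[K_1:R_i]}$, whence $[K_1:R_i]\le a_i^2$. This is heavier machinery but yields the somewhat sharper second-stage bound $\prod a_i^2$ (versus the paper's $d^N$), and it does not require first making $K_1$ itself abelian---only that $[\tilde K_1,\tilde K_1]$ be central, which is all your Jordan step delivers. Your handling of the positive-dimensional $\tilde K_1$ is fine: once Schur forces $\tilde R_i$ (which contains $C$) to act by scalars on an irreducible constituent $W$, the problem reduces to an $\omega$-projective representation of the finite abelian group $K_1/R_i$, and the twisted group algebra $\C_\omega[K_1/R_i]$ is simple when $\omega$ is nondegenerate.
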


\begin{proof}
It suffices to prove the lemma after replacing $C$ by $Z(G)$, $\bar G$ by $G/Z(G) = \PGL_{a_1}\times \cdots\times \PGL_{a_k}$,
and $K$ by its image in $G/Z(G)$.  Without loss of generality, therefore, we assume $C=Z(G)$.  Embedding each factor
$\PGL_{a_i}$ by its adjoint representation into $\GL_{a_i^2-1}$, we obtain a faithful representation $\bar G\to \GL_N$
where $N = \sum_i a_i^2-1$.  By Jordan's theorem, $K$ has an abelian subgroup whose index is bounded in terms of $N$.
Without loss of generality, therefore we assume $K$ is commutative.  It follows that for every $d$, the group $K_d := \{h^d\mid h\in K\}$ is of index
$\le d^N$ in $K$.  
If $u,v\in \PGL_{a_i}(\C)$ commute, then for any lifts $\tilde u$ and $\tilde v$ of $u$ and $v$ to $\GL_{a_i}(\C)$, 
$$\tilde u \tilde v \tilde u^{-1} = \zeta \tilde v,$$
where $\zeta^{a_i} = 1$.  Thus $\tilde u^{a_i}$ and $\tilde v^{a_i}$ commute.  Applying this to the projections of $x$ and $y$ to varying
factors $\PGL_{a_i}(\C)$, we conclude that $\tilde x^d$ and $\tilde y^d$ commute for all lifts $\tilde x$ and $\tilde y$ in $G$, where
$d$ denotes the least common multiple of the $a_i$.  Thus we may take $K' = K_d$.
\end{proof}

\begin{prop}\label{larsen}\footnote{The proof is also due to Michael Larsen.} 
For all positive integers $n$ there exists a constant $J(n)$ such that every closed reductive subgroup $H$ of $\GL_n$,
has an open subgroup $H'$ of index $\le J(n)$ such that
$$H' = Z(H') [H^\circ,H^\circ].$$ 
\end{prop}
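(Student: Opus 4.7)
The plan is to produce $H'$ as a nested intersection of open subgroups of bounded index, invoking Lemma~\ref{strong-Jordan} at the end. Set $S := [H^\circ, H^\circ]$ and $T := Z(H^\circ)^\circ$; both are characteristic in $H^\circ$, hence normal in $H$, and $H^\circ = T \cdot S$ with $S$ connected semisimple and $T$ a central torus.

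First I would restrict to the open subgroup of $H$ on which conjugation acts on $S$ by inner automorphisms. The group $\Out(S) = \Aut(S)/\Inn(S)$ is finite, and $|\Out(S)|$ is bounded in terms of $n$, since $\dim S \le n^2$ forces $S$ into one of finitely many Dynkin types. Writing $\phi : H \to \Aut(S)$ for the conjugation action and $H_A := \phi^{-1}(\Inn(S))$, one has $[H:H_A] \le |\Out(S)|$, and for each $h \in H_A$ there exists $s_h \in S$ with $s_h^{-1} h \in C_H(S) =: C$; thus $H_A = S \cdot C$. Since $C^\circ \subseteq H^\circ \cap C = C_{H^\circ}(S) = Z(H^\circ) = T \cdot Z(S)$ and $T$ is connected and centralizes $S$, one gets $C^\circ = T$.

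Next I would extract an abelian open subgroup $C_2 \subseteq C$ of bounded index via Lemma~\ref{strong-Jordan}. Decompose $\C^n = \bigoplus_\chi V_\chi$ into $T$-weight spaces; there are at most $n$ summands, each of dimension $a_\chi \le n$. Since $C$ normalizes $T$, it permutes the $V_\chi$, so the stabilizer $C_1 \subseteq C$ of this decomposition satisfies $[C:C_1] \le n!$. The embedding $C_1 \hookrightarrow G := \prod_\chi \GL(V_\chi)$ sends $T$ into the diagonal center $Z(G) = \prod_\chi \C^*$, so the image $\bar K$ of $C_1$ in $\bar G := G/Z(G) = \prod_\chi \PGL(V_\chi)$ is finite (because $C_1/T$ is finite). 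Lemma~\ref{strong-Jordan} then yields a subgroup $\bar K' \subseteq \bar K$ whose preimage in $G$ is commutative and with $[\bar K : \bar K']$ bounded in terms of $(a_\chi)$; since these tuples range over a finite set depending only on $n$, the bound is uniform. Letting $C_2 \subseteq C_1$ be the preimage of $\bar K'$ gives a commutative open subgroup of $C$ containing $T$ with $[C:C_2]$ bounded.

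Finally, put $H' := S \cdot C_2$. Since $T \subseteq C_2$, we have $H^\circ = T \cdot S \subseteq H'$, so $H'$ is open, and the combined estimates give $[H:H'] \le J(n)$. Because $C_2$ is commutative and lies in $C_H(S)$, every element of $C_2$ commutes with both $S$ and $C_2$, so $C_2 \subseteq Z(H')$; therefore $H' = S \cdot C_2 \subseteq [H^\circ, H^\circ] \cdot Z(H')$, and the reverse inclusion is immediate. The main technical hurdle is the uniform bound on $|\Out(S)|$, which rests on classifying (up to isomorphism) the finitely many connected semisimple groups of dimension $\le n^2$; the remaining steps are essentially bookkeeping around Lemma~\ref{strong-Jordan}.
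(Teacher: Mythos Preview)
Your argument is correct and follows the same overall architecture as the paper's proof: pass to the open subgroup acting on $S=[H^\circ,H^\circ]$ by inner automorphisms, write it as $S$ times the centralizer $C$, and then use Lemma~\ref{strong-Jordan} to extract an abelian open subgroup of $C$ of bounded index. The genuine difference is in how the product $\prod \GL_{a_i}$ is produced for the application of Lemma~\ref{strong-Jordan}. The paper decomposes $\C^n$ into isotypic components for the \emph{semisimple} part $D=S$, so that $Z_{\GL_n}(D)\cong\prod_i \GL_{a_i}$ by Schur's lemma, and it first spends a separate step (the $\GL_r(\Z)$ bound on finite automorphism groups of tori) to force $Z(H^\circ)^\circ$ into the center. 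You instead decompose $\C^n$ into weight spaces for the \emph{central torus} $T$, and replace the $\GL_r(\Z)$ step by the cruder permutation bound $[C:C_1]\le n!$ for the stabilizer of the weight-space decomposition. Both routes land in the same place; the paper's choice makes the identification of the ambient $\prod\GL_{a_i}$ slightly more canonical (no permutation step), while yours avoids invoking the auxiliary fact about finite subgroups of $\GL_r(\Z)$. Your closing remark overstates the difficulty of bounding $|\Out(S)|$: one only needs that there are finitely many Dynkin types of rank $\le n$, not an actual classification.
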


\begin{proof}
 Let $D = [H^\circ,H^\circ]$ be the semisimple part of $H$ and $Z = Z(H^\circ)$.  
The conjugation action of $H$ on its characteristic subgroup $Z^\circ$ factors through 
$H/H^\circ$ to give a homomorphism $H/H^\circ\to \Aut(Z^\circ) = \GL_r(\Z)$, where $r$ is the rank of the torus  $Z^\circ$.
It is well known that finite subgroups of $\GL_r(\Z)$ are bounded in terms of $r$ (thus in terms of $n$), so we may assume
$Z^\circ$ lies in the center of $H$.  

The conjugation action of $H$ on its characteristic  subgroup 
$D$ defines a homomorphism $H\to \mathrm{Out}(D)$.  As $\mathrm{Out}(D)$ can be bounded in terms of the rank of $D$ (and therefore in terms
of $n$), we are justified in assuming that $H$ acts by inner automorphisms on $D$.  
Thus, 
$$H\subset D Z_{\GL_n}(D Z^\circ).$$

The product map 
$$D\times Z_{\GL_n}(D Z^\circ)\to D Z_{\GL_n}(D Z^\circ)$$
has kernel $Z(D)$ so there is a well-defined map 
$$\phi\colon H\to Z_{\GL_n}(D Z^\circ)/Z(D)$$
with kernel $D$, and therefore a well-defined map 
$$H\to Z_{\GL_n}(D Z^\circ)/Z(D) Z^\circ$$
with kernel $H^\circ = DZ^\circ$.

As $D$ is semisimple, the natural $n$-dimensional representation of $D$ is semisimple, hence of the form 
$$V_1^{\oplus a_1}\oplus\cdots\oplus V_k^{\oplus a_k}$$
for pairwise distinct irreducible factors $V_i$. The centralizer of $D$ in $\GL_n$  is therefore
$$G = \GL_{a_1}\times\cdots\times \GL_{a_k},$$
where $a_1+\cdots+a_k \le n$, so the set of possibilities for the $a_i$ is bounded in terms of $n$.
Setting $C = Z(D) Z^\circ$ and $K = H/H^\circ$ and applying Lemma~\ref{strong-Jordan}, it follows that $H/H^\circ$ has 
a subgroup $K'$ of bounded index, which then determines an open subgroup $H'$ of bounded index in $H$.
Thus, $H' = D Z_{H'}(D)$, and by Lemma~\ref{strong-Jordan}, $Z_{H'}(D)$ is commutative.  Finally,
$Z_{H'}(D)$ commutes with $D$, so it lies in $Z(H')$.
\end{proof}

\section{Neighborhood of a deformation point}\label{deformation-point}

\tab In this section we reserve the notation $\Gamma$ to denote any finitely generated group and $G$ to denote the (indefinite) special orthogonal group $SO(p,q)$. Also, throughout the section, we assume $p < q$, $q \neq p+1$, and $p,q$ sufficiently large. Let $(V,B)$ denote the natural representation of $SO(p,q)$, where $V = \R^{p+q}$ and $B: V \times V \to \R$ is the bilinear form. Let $Q(v) := B(v,v)$  denote the quadratic form  associated to $B$. (Recall, the stabilizer of $Q$ under the natural action of $GL_{p+q}(\R)$ on $V$ is $O(p,q)$, the indefinite orthogonal group with signature $(p,q)$.)


It is not true, in general, that the set of $\R$-points of a variety $X$ defined over $\R$ is nonempty. But, in the case where $X$ contains a nonsingular real point, there exists an $\R$-open neighborhood  diffeomorphic to $\R^{d}$, where $d$ is the dimension of the Zariski-tangent space at the point. The proof is essentially based on the Implicit Function Theorem employed in the following well-known result. Given a lack of proper reference we provide a proof.

\begin{thm}\label{real-points}
Let $X$ be an affine scheme of finite type over $\R$ and let $x$ be a nonsingular real point in $X$. Then there is an $\R$-open neighborhood of $x$ diffeomorphic to $\R^{d}$, where $d$ is the dimension of the Zariski-tangent space of $X$ at $x$.
\end{thm}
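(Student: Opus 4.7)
The plan is to reduce to a direct application of the Implicit Function Theorem. First I would choose a closed embedding $X \hookrightarrow \A^N_\R$ with defining ideal generated by polynomials $f_1,\ldots,f_m \in \R[t_1,\ldots,t_N]$, and identify $x$ with its image in $\R^N$. The Zariski tangent space at $x$ is the kernel of the Jacobian $J(x) = (\partial f_i/\partial t_j)(x)$, so the hypothesis $\dim T_x X = d$ translates into $\rank J(x) = N - d =: r$. After reordering the $f_i$ and the coordinates $t_j$, I may assume the top-left $r \times r$ submatrix of $J(x)$ is invertible.

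Applying the Implicit Function Theorem to the system $f_1 = \cdots = f_r = 0$ then yields an open ball $W$ around $(x_{r+1},\ldots,x_N) \in \R^d$, an $\R$-open neighborhood $U \subseteq \R^N$ of $x$, and smooth functions $g_1,\ldots,g_r \colon W \to \R$ such that
\[
M := \{t \in U : f_1(t) = \cdots = f_r(t) = 0\} = \{(g_1(s),\ldots,g_r(s),s) : s \in W\}
\]
is a smooth $d$-dimensional submanifold of $U$. Shrinking $W$ to a smaller ball if necessary, $M$ is diffeomorphic to $\R^d$.

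It then remains to check that $X(\R)$ coincides with $M$ on some $\R$-open neighborhood of $x$. The inclusion $X(\R) \cap U \subseteq M$ is automatic since $f_1,\ldots,f_r$ vanish on $X$. For the reverse inclusion I would introduce the auxiliary affine scheme $Y = V(f_1,\ldots,f_r) \subseteq \A^N_\R$. By the Jacobian criterion, $Y$ is regular of dimension $d$ at $x$, hence $\mathcal{O}_{Y,x}$ is a domain and $Y$ is reduced and irreducible on some Zariski neighborhood of $x$. The scheme-theoretic hypothesis that $x$ is nonsingular on $X$ says $\mathcal{O}_{X,x}$ is regular of dimension $d$. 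The closed immersion $X \hookrightarrow Y$ induces a surjection $\mathcal{O}_{Y,x} \twoheadrightarrow \mathcal{O}_{X,x}$ of regular local rings of the same dimension, whose source is a domain; its kernel is therefore a prime of height zero, hence zero. Spreading this isomorphism out, $X$ and $Y$ agree scheme-theoretically on some Zariski open $V \ni x$, so $X(\R) \cap (U \cap V) = M \cap V$ supplies the desired neighborhood diffeomorphic to $\R^d$.

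The main obstacle is precisely this final identification: the Implicit Function Theorem alone only produces the real locus of $Y$, and one must rule out the possibility that $f_{r+1},\ldots,f_m$ carve out a proper subscheme of $Y$ near $x$. The scheme-theoretic reading of ``nonsingular'' is exactly what prevents this, by forcing the surjection $\mathcal{O}_{Y,x} \twoheadrightarrow \mathcal{O}_{X,x}$ to be an isomorphism.
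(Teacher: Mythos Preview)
Your proof is correct and follows essentially the same strategy as the paper: reduce to the Implicit Function Theorem by showing that, in a neighborhood of $x$, the scheme $X$ is cut out by exactly $N-d$ equations whose Jacobian minor is invertible. The only difference is in how this reduction is justified: the paper simply cites Mumford \cite[Chapter III, \S 4, Corollary 1]{Mu} for the fact that nonsingularity yields local generators $f_1,\ldots,f_{n-d}$ of $I$ with invertible Jacobian, whereas you recover the same conclusion by hand---selecting $r=N-d$ of the given generators, forming $Y=V(f_1,\ldots,f_r)$, and arguing that the surjection $\mathcal{O}_{Y,x}\twoheadrightarrow\mathcal{O}_{X,x}$ between regular local rings of equal dimension $d$ must be an isomorphism. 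Your route is slightly longer but more self-contained; the paper's is shorter but outsources the algebraic content to the reference.
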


\begin{proof}
By hypothesis $X = \Spec A$, where $A$ is finitely generated  $\R$-algebra. Then $A \cong \R[X_1, \ldots, X_n] /I$ for some ideal $I$, and the $\R$-points are given by 
$$
X(\R) = \{ (a_1, \ldots, a_n) \in \R^n: f(a_1, \ldots, a_n) = 0, \; \mathrm{for \; each \; f \in I} \}.
$$ 
Since $x$ is a nonsingular real point, it belongs to a unique irreducible component of $X$; then there exists some $f_1, \ldots, f_{n-d}$ such that the following conditions hold \cite[Chapter III, \S 4, Corollary 1]{Mu}:
\begin{enumerate} 
\item 
$f_1, \ldots, f_{n-d}$ generates $I$ in a neighborhood of $x$.
\item 
the Jacobian matrix $\left( \frac{\partial f_i}{\partial x_j} \right)_{1 \leq i,j \leq n-d}$ is invertible at $x$.
\end{enumerate}
By the Implicit Function Theorem, there exists a neighborhood $U$ of $x$ in $X(\R)$, a neighborhood $V$ of the origin in $\R^{d}$, and a smooth function $g : V \to \R^{n-d}$ such that $\textrm{id} \times g : V \to \R^{n}$ maps onto $U$. Furthermore, $\pi \circ g = 1$, where $\pi: \R^{n} \to \R^{d}$ is the usual projection map. This gives the desired diffeomorphism. 
\end{proof}

\begin{prop}\label{R-open-nbhd}
Let $\rho_0 \in X_{\Gamma,G}$ be a representation with Zariski-dense image in $SO(p) \times SO(q)$. Then $\rho_0$ is a nonsingular point in $X_{\Gamma,G}$.  Moreover, there exists an open neighborhood of $\rho_0$ diffeomorphic to $\R^{d}$, where $d$ is the dimension of the Zariski-tangent space at $\rho_0$.
\end{prop}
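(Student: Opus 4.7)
The plan is to prove nonsingularity of $\rho_0$ in $X_{\Gamma,G}$; once that is in hand, the existence of an $\R$-open neighborhood diffeomorphic to $\R^{d}$ is delivered immediately by Theorem \ref{real-points}.

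My approach is Weil's deformation theory. Using the presentation (\ref{pre}), I would embed $X_{\Gamma,G}$ as the closed subscheme of $G^{m+2g}$ cut out by the $m+1$ defining relators. The Zariski tangent space at $\rho_0$ is $Z^{1}(\Gamma,\Ad\circ\rho_0)$, of dimension given by (\ref{tas}). By Weil's criterion, $\rho_0$ will be a nonsingular point as soon as the obstruction group $H^{2}(\Gamma,\Ad\circ\rho_0)$ vanishes, because then every infinitesimal deformation integrates and the local dimension of $X_{\Gamma,G}$ at $\rho_0$ matches the tangent dimension.

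To verify this vanishing I would invoke Poincar\'e duality for oriented cocompact Fuchsian groups (viewed as orbifold fundamental groups of closed orientable $2$-orbifolds), which identifies $H^{2}(\Gamma,V)$ with $(V^{*})^{\Gamma}$ for any finite-dimensional $\R$-representation $V$ of $\Gamma$. Taking $V=\g=\so(p,q)$ and using the Killing form to realise $\g\cong\g^{*}$ as $\Gamma$-modules, the task reduces to computing $\g^{\Gamma}$. By Zariski density of $\rho_0(\Gamma)$ in $SO(p)\times SO(q)$, we have $\g^{\Gamma}=\g^{SO(p)\times SO(q)}$, and I would compute the latter from the Cartan-type decomposition
\[
\so(p,q)=\so(p)\oplus\so(q)\oplus M_{p\times q}(\R),
\]
on which $SO(p)\times SO(q)$ acts by the adjoint representations on the first two summands and by the irreducible external tensor $\R^{p}\otimes\R^{q}$ on the third. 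For $p,q$ sufficiently large none of these summands carries a nonzero invariant vector, so $\g^{\Gamma}=0$; hence $H^{2}(\Gamma,\Ad\circ\rho_0)=0$ and $\rho_0$ is nonsingular.

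The step I expect to be most delicate is the correct application of Poincar\'e duality in the \emph{orbifold} setting, because the torsion generators $x_i$ prevent $\Gamma$ from having cohomological dimension two outright and force one to argue either with orbifold cohomology or by passing to a finite-index torsion-free subgroup and transferring. A more hands-on fallback, which I would adopt if the duality route became unwieldy, is to work directly with the relation map $F\colon G^{m+2g}\to G^{m+1}$ sending a tuple to the $m+1$ relators evaluated on it, compute the corank of $dF_{\rho_0}$ using the Zariski density of $\rho_0$ (this corank turns out to equal $\dim\g^{\Gamma}=0$ by the same decomposition above), and thereby confirm that $\dim_{\rho_0} X_{\Gamma,G}=\dim Z^{1}(\Gamma,\Ad\circ\rho_0)$, which is equivalent to nonsingularity.
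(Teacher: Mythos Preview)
Your proposal is correct and follows essentially the same route as the paper: both reduce, via Weil's criterion together with the Killing-form self-duality $\g\cong\g^{*}$, to showing $\g^{SO(p)\times SO(q)}=0$, which you argue via the Cartan decomposition $\so(p,q)=\so(p)\oplus\so(q)\oplus(\R^{p}\otimes\R^{q})$ and the paper argues via the equivalent computation that the centralizer $Z_{G}(SO(p)\times SO(q))$ is finite. The paper sidesteps your worry about orbifold Poincar\'e duality by citing Weil \cite{We} directly for the implication $(\g^{*})^{\Gamma}=0\Rightarrow\rho_{0}$ nonsingular, so you need not rederive that step.
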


\begin{proof}
Let $H := SO(p)\times SO(q)$. Since $\g$ is a self-dual $G$-representation, via the Killing form, the $\Gamma$-invariant vectors under the coadjoint representation of $\g$ are given by $(\g^{*})^\Gamma = \g^\Gamma = \g^H$. By a result of Weil \cite{We}, if the coadjoint representation of $\Gamma$ in $\g$ has no $\Gamma$-invariant vectors then $\rho_0$ is a nonsingular point in $X_{\Gamma,G}$. Therefore $\rho_0$ belongs to a unique irreducible component $C$ of $X_{\Gamma,G}$, and furthermore by Theorem \ref{real-points},  $C$ contains an $\R$-open neighborhood of $\rho_0$ diffeomorphic to $\R^{d}$, where $d$ is the dimension of the Zariski-tangent space at $\rho_0$. It remains to show that $\g^{H} = \{ 0\}$, equivalently that the centralizer $Z_H(G)$ of $H$ in $G$ is finite. 

Let $M(m,n)$ denote the $\R$-vector space of matrices of size $m \times n$. Let $GL_n(\R)$ denote the group of invertible matrices of $M(n,n)$. Let $I_n$ denote the identity matrix of size $n \times n$. Let 
$S=
\begin{bmatrix}
A&B\\
C&D
\end{bmatrix}
\in G$, where $A \in M(p,p)$, $B \in M(p,q)$, and $C \in M(q,p)$, $D \in M(q,q)$.
Let 
$T \in 
\begin{bmatrix}
M&0\\
0&N
\end{bmatrix}
\in SO(p) \times SO(q)$, where $M \in M(p,p)$, $N \in M(q,q)$. 

The condition $S \in Z_G(H)$ is equivalent to the relations $AM = MA$, $DN = ND$, $BN = MB $ and $CM = NC$ for each $M \in SO(p)$, $N \in SO(q)$. Since $SO(n)$ acts irreducibly on its natural representation $\R^n$ ($n\geq 3$), the relations $BN = MB$ and $CM = NC$ imply that $B = 0$ and $C = 0$ (take $N = I_q$, $M = I_p$). It follows that $A,B$ are invertible, whence from the first two relations we have $A \in Z_{GL(p)}(SO(p))$,  $B \in Z_{GL(q)}(SO(q))$. Since the centralizer of $SO(n)$ in $GL(n)$ consists of scalar matrices whenever $n \geq 3$, it follows that $A = \lambda I_{p}$, and $B = \mu I_{q}$, where $\lambda, \mu  \in \R^{*}$. Hence $S$ must be of the form $\lambda I_p \oplus \mu I_q$. But $S$ also preserves the the canonical bilinear form $I_p \oplus (-I_q)$ associated to $V$, so $\lambda = \pm 1$ and $\mu = \pm 1$. The result follows.
\end{proof}


Now, we properly begin the analysis of the neighborhood of the \textit{deformation point} $\rho_0 : \Gamma \to SO(p,q)$ with Zariski-dense image in $SO(p) \times SO(q)$. 
First, let us recall some definitions from the theory of quadratic spaces. 

\begin{defn} 
\begin{enumerate}[(a)]
\item 
A vector $ v \in V$ is said to be \emph{isotropic} if $Q(v)= 0$ and $\mathit{anisotropic}$ otherwise.
\item A subspace $W \sbe V$ is said to be \emph{isotropic} if it contains a nonzero isotropic vector.

\item
A subspace $W \sbe V$ is said to be \emph{totally-isotropic} if $B(w,w') = 0$ for all $w,w' \in W$.

\item 
A vector $v \in V$ is \emph{positive} if $Q(v) > 0$, and a subspace $W \sbe V$ is positive if every nonzero vector of $W$ is positive. Similarly, the notions negative vector and negative subspace are defined.
\end{enumerate}
\end{defn}

In order to state the results succinctly, let us note here two properties of representations or points in $X_{\Gamma,G}$. \\

$(P_m)$: Let $\Delta$ be the intersection of all subgroups of index at most $m$ in $\Gamma$. Then $\rho \mid_{\Delta}$ acts absolutely irreducibly on $V$. \\

$(Q_m)$: Let $\Delta$ be the intersection of all subgroups of index at most $m$ in $\Gamma$. Then $\rho \mid_{\Delta}$ stabilizes some $p$-dimensional positive subspace $W$ of $V$ and its $q$-dimensional orthogonal complement $W^\perp$ which is a negative subspace of $V$, and does not stabilize any proper nonzero subspace other than $W$ and $W^\perp$. Also, $\rho \mid_{\Delta}$ acts absolutely irreducibly on $W$ and $W^\perp$.  \\

\noindent Note that the intersection is over finitely many subgroups, since there are only finitely many subgroups of a given finite index in any finitely generated group. 

It is well known that the set $Gr(k,V)$ of $k$-dimensional subspaces of $V$ has a natural embedding in $\P(\bigwedge^{k}V)$ as a closed subvariety (called the \textit{Grassmannian variety}). Informally, in some fixed basis of $V$, a $k$-dimensional subspace is given in Pl$\ddot{\textrm{u}}$cker coordinates, and the variety $Gr(k,V)$ is the vanishing set of the polynomial relations among the coordinates (called Pl$\ddot{\textrm{u}}$cker relations). The \textit{flag variety} $\FF(V;n_1,\ldots,n_r)$, where $1 \leq n_1 < n_2 < \ldots < n_r \leq \dim V$, is defined as the set of all chains of subspaces $V_i$ of $V$:
\[
\FF(V;n_1,\ldots,n_r) = \{ 0 \sbe V_1 \sbe V_2 \sbe \ldots V_r \sbe V   \colon   \; \dim   V_i = n_i \},
\]
It is well known that $\FF(V;n_1,\ldots,n_r)$ also has a natural embedding as a closed subvariety in $Gr(n_1,V) \times Gr(n_2,V) \times \ldots \times Gr(n_r,V)$.

\begin{lem}\label{compactness}
Let $k$ be a fixed positive integer strictly less than $\dim V$. The following properties of $\rho \in X_{\Gamma,G}$  are closed in the real topology:
\begin{enumerate} 
\item 
$\rho$ stabilizes a $k$-dimensional subspace of $V_{\C}$.
\item
$\rho$ stabilizes a $p$-dimensional isotropic subspace of $V$.
\end{enumerate} 
\end{lem}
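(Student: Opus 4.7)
The plan is to realize each property as the image of a closed set under a projection whose target is $X_{\Gamma,G}(\R)$ and whose fiber direction is compact in the real topology. Once that is done, the projection is a closed map (since projection off a compact Hausdorff factor is closed), so the image is closed.

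Because $\Gamma$ is finitely generated, fix generators $\gamma_1, \ldots, \gamma_s$. For any subspace $W$ of an ambient vector space $U$ (over $\R$ or $\C$), the condition $\rho(\gamma_j) W \sbe W$ is polynomial in the matrix entries of $\rho(\gamma_j)$ and the Pl\"ucker coordinates of $W$, hence defines a closed subvariety of $X_{\Gamma,G} \times Gr(k, U)$; and $\rho$ stabilizes $W$ iff this holds for each $j$. For~(1) I take $M_1 := Gr(k, V_\C)(\C)$, a compact complex projective manifold, and set
\[
Z_1 := \{(\rho, W) \in X_{\Gamma,G}(\R) \times M_1 : \rho(\gamma_j) W \sbe W, \; 1 \leq j \leq s\}.
\]
Then $Z_1$ is closed, and its projection to $X_{\Gamma,G}(\R)$ is exactly the locus described in~(1).

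For~(2) the only additional ingredient is to show that the subset $I \sbe Gr(p, V)(\R)$ of $p$-dimensional real subspaces containing a nonzero isotropic vector is closed in the compact real Grassmannian. One direct way: if $W_n \to W$ in $Gr(p,V)(\R)$ and each $W_n$ contains a unit isotropic vector $v_n$ (with respect to some fixed positive definite norm on $V$), pass to a convergent subsequence $v_n \to v$; then $v \in W$ has unit norm, so is nonzero, and $Q(v) = 0$ by continuity. Equivalently, the complement of $I$ is the open locus of subspaces on which $Q$ restricts to a definite form. Now define $Z_2 \sbe X_{\Gamma,G}(\R) \times I$ by the same stabilization conditions on the generators; $Z_2$ is closed, $I$ is compact (being closed inside the compact real Grassmannian), and the projection of $Z_2$ to $X_{\Gamma,G}(\R)$ is the locus described in~(2). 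The only genuine point to verify is the closedness of $I$; everything else is a formal application of Pl\"ucker coordinates and the closed map lemma, so there is no substantive obstacle.
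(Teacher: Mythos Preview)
Your proposal is correct and follows essentially the same strategy as the paper: exploit compactness of the relevant Grassmannian (or flag variety) and project off it. The paper phrases this via sequential compactness (take $\rho_n\to\rho$, extract a convergent subsequence $W_n\to W$, use continuity of the $G$-action), while you phrase it via the closed map lemma; these are equivalent in this metric setting.

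The only noteworthy difference is in part~(2). The paper encodes the isotropy condition by passing to the partial flag variety $\FG(V;1,p)\subset \FF(V;1,p)$ of pairs $(L,W)$ with $L$ an isotropic line inside a $p$-plane $W$, observes this is closed (as $\pi_1^{-1}(Z(Q))\cap\FF(V;1,p)$) hence compact, and then argues as in~(1). You instead work directly inside $Gr(p,V)(\R)$ and show the locus $I$ of isotropic $p$-planes is closed, either by a unit-vector limit argument or by noting its complement is the open locus of definite subspaces. Your route is slightly more direct; the paper's flag-variety route makes the algebraicity of the isotropy condition more transparent. Both are fine.
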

\begin{proof}
First consider property  $(1)$. Let $\rho_1,\rho_2,\ldots,$ be a sequence of representations in $X_{\Gamma,G}$ converging to $\rho$. For each $n \geq 1$, suppose $\rho_n$ stabilizes some $k$-dimensional subspace $W_n$ of $V_\C$. Since $Gr(k,V_\C)$ is a compact submanifold of some projective space, we have $W_n \to W$, for some $W \in Gr(k,V_{\C})$. Clearly, the action of $G$ on $Gr(k,V_{\C})$ is continuous in the real topology, so if $g_n \to g$, $W_n \to W$, and $g_n$ stabilizes $W_n$ for each $n$, then $g$ stabilizes $W$. In particular, this holds if $\rho_n(\gamma) \to \rho(\gamma)$, where $\gamma$ is a generator of $\Gamma$ (refer to equation \ref{pre} of \S \ref{intro}). As $\rho$ determines and is determined by $\rho(\gamma)$, the property  that $\rho \in X_{\Gamma,G}$ stabilizes a $k$-dimensional subspace of $V$ is closed in the real topology.

On the other hand, let $\FG(V;1,p)$ be the subset of $\FF(V;1,p)$ consisting of flags $ 0 \sbe V_1 \sbe V_p  \sbe V $ such that $W$ is spanned by an isotropic vector. Consider the canonical projection, 
$$
\pi_1 : Gr(1,V) \times Gr(p,V) \to Gr(1,V).
$$
Let $Z(Q)$ denote the variety defined by the quadratic form $Q$ in $Gr(1,V)$.  Then, $\FG(V;1,p) = \pi_1^{-1}(Z(Q)) \cap \FF(V;1,p)$, so that $\FG(V;1,p)$ is a compact submanifold in some projective space. Now, the proof of property $(2)$ is similar to the proof of property $(1)$.
\end{proof}

\begin{prop}\label{Deformation-point}
Let $\rho_0  :\Gamma \to G$ be a representation with Zariski dense image in $SO(p) \times SO(q)$. For each positive integer $m$,  there is an $\R$-open neighborhood $U$  of $\rho_0$ such that each $\rho \in U$ satisfies either the property $(P_m)$ or the property $(Q_m)$.
\end{prop}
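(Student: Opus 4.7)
The strategy is to intersect three $\R$-open neighborhoods of $\rho_0$: two provided by Lemma~\ref{compactness} and one by the upper semicontinuity of the endomorphism algebra dimension. The key preliminary observation is that, since $\Delta$ has finite index in $\Gamma$ and $SO(p)\times SO(q)$ is connected as an algebraic group, $\overline{\rho_0(\Delta)} = SO(p)\times SO(q)$. Because $p,q\geq 3$, each factor acts absolutely irreducibly on its natural complex representation, so the only $\rho_0|_\Delta$-invariant subspaces of $V_\C$ are $0, W_\C, W_\C^\perp, V_\C$, with $W_\C$ positive (hence non-isotropic) and $W_\C^\perp$ negative. In particular $\rho_0$ already satisfies $(Q_m)$.

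Applying Lemma~\ref{compactness}(1) to $\rho|_\Delta$ over the finitely many $k\in\{1,\ldots,p+q-1\}\setminus\{p,q\}$ produces an $\R$-open neighborhood $U_1$ of $\rho_0$ on which every $\rho|_\Delta$-invariant subspace of $V_\C$ has dimension in $\{0,p,q,p+q\}$. Similarly Lemma~\ref{compactness}(2) produces $U_2$ on which no $\rho|_\Delta$ stabilizes a $p$-dimensional isotropic subspace of $V$. Fixing generators $\delta_1,\ldots,\delta_s$ of $\Delta$, the centralizer $\mathrm{End}^{\rho|_\Delta}(V_\C)$ is the kernel of the linear map $T\mapsto(T\rho(\delta_i)-\rho(\delta_i)T)_i$ whose rank is lower semicontinuous in $\rho$; hence $\dim\mathrm{End}^{\rho|_\Delta}(V_\C)$ is upper semicontinuous, and at $\rho_0$ equals $2$ by Schur applied to the distinct irreducible summands $W_\C$ and $W_\C^\perp$. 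Thus there is a neighborhood $U_3$ on which this dimension is at most $2$.

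Set $U:=U_1\cap U_2\cap U_3$ and fix $\rho\in U$. The first step is to show that every $\rho|_\Delta$-invariant subspace of $V_\C$ is defined over $\R$: a non-real $p$-dimensional invariant $U'$ would, by $U_1$, force $U'\cap\overline{U'}=0$ and $\dim(U'+\overline{U'})=2p=q$; a compactness argument along a hypothetical sequence $\rho_n\to\rho_0$ carrying such invariants then yields $U'\to W_\C$ and $U'+\overline{U'}\to W_\C^\perp$, giving the contradiction $W_\C\subseteq W_\C^\perp$ (impossible since $W\cap W^\perp=0$). The second step is that $\rho|_\Delta$ acts semisimply on $V$: any real invariant $U'$ with $U'\cap(U')^\perp\neq 0$ would produce a nonzero totally isotropic invariant subspace whose dimension, by $U_1$ together with the bound that $V$ admits no totally isotropic subspace of dimension exceeding $p$, must equal $p$, violating $U_2$.

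With these pieces in hand, the Wedderburn decomposition $V_\C = \bigoplus V_i^{n_i}$ into pairwise non-isomorphic absolutely irreducible summands satisfies $\sum n_i^2\leq 2$ by $U_3$, while each $\dim V_i$ lies in $\{p,q,p+q\}$ and the total dimension is $p+q$; the only possibilities are a single summand, yielding $(P_m)$, or two distinct summands of dimensions $p$ and $q$. In the latter case the $p$-dimensional summand is real, absolutely irreducible, non-isotropic, and close to $W$ in the Grassmannian by a further compactness argument; continuity of the signature then makes it positive, its orthogonal complement is the unique $q$-dimensional invariant subspace and is negative, and both factors are absolutely irreducible --- this is exactly $(Q_m)$. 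The main obstacle will be the first step: naive dimension counting does not rule out non-real $p$-dimensional invariant subspaces in the exceptional case $q=2p$, and handling this case cleanly requires the compactness input forcing all nearby invariants to converge to the corresponding subspaces of $\rho_0$.
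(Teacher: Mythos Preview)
Your proof is correct and shares the paper's core strategy: use Lemma~\ref{compactness} to exclude invariant subspaces of the wrong dimension or of isotropic type, then pull back along the restriction map $X_{\Gamma,G}\to X_{\Delta,G}$. However, you go beyond the paper in two respects. First, you introduce the upper semicontinuity of $\dim\mathrm{End}^{\rho|_\Delta}(V_\C)$ (your neighborhood $U_3$), a tool the paper does not use; this gives you a clean route to the uniqueness of the decomposition $V_\C=V_1\oplus V_2$ and rules out repeated isotypic factors. Second, you treat explicitly several points the paper leaves entirely implicit: that $p$-dimensional invariants in $V_\C$ are defined over $\R$ (including the edge case $q=2p$, which genuinely requires the extra compactness step you flag), semisimplicity of the action via the totally isotropic radical, and positivity of the $p$-dimensional summand via continuity of the signature on the Grassmannian. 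The paper compresses all of this into the single clause ``shrinking $U_\Gamma$ if necessary we may assume that it consists of representations which satisfy either the property $(P_1)$ or the property $(Q_1)$,'' so your argument is in fact more complete than the published one.

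One presentational remark: you fix $U=U_1\cap U_2\cap U_3$ and then invoke further sequential compactness arguments (for the $q=2p$ step and for positivity) that implicitly shrink $U$ again; it would be cleaner to phrase those as additional closed conditions avoided by $\rho_0$ and build them into the definition of $U$ from the outset.
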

\begin{proof}

First, let us show that the statement holds with $m=1$, i.e., there is a neighborhood $U$ of $\rho_0$ such that for each $\rho \in U$ satisfies either the property $(P_1)$ or the property $(Q_1)$. Given a representation $\rho \in X_{\Gamma,G}$, let $k_\rho$ denote the minimum of the dimensions of all subspaces stabilized by $\rho$. Let $\FS_k$ be the set of representations which stabilize some $k$-dimensional subspace of $V$. If $\rho$ stabilizes some $k$-dimensional subspace of $V$ then it also stabilizes its $(p+q-k)$-dimensional orthogonal complement, so the set of representations $\rho$ with $k_\rho \neq p$ and $k_\rho \neq \dim V$ is given by
$$
\FS :=  \FS_1 \cup \ldots \FS_{p-1} \cup \FS_{p+1} \; \cup \; \ldots \cup \FS_{q-1} $$ 
which, by Lemma \ref{compactness} $(1)$, is a closed subset in the real topology  (note that by assumption, $p < q$ and $q \neq p+1$). Clearly, $\rho_0 \notin \FS$. 

On the other hand, since $\rho_0$ does not stabilize any isotropic subspace, it follows from Lemma \ref{compactness} $(2)$ that $\rho_0$ does not belong to the $\R$-closed subset $\FI_p$ consisting of representations each of which stabilizes some $p$ dimensional isotropic subspace of $V$. Hence $U_\Gamma  := X_{\Gamma,G} \setminus \{ \FS \; \cup \; \FI_p \}$ is an $\R$-open neighborhood of $\rho_0$. By Lemma \ref{compactness} $(1)$, again, we see that the $\rho_0$ satisfies the latter statement on absolute irreducibility in the definition of property $(Q_m)$, and therefore shrinking $U_\Gamma$ if necessary we may assume that it consists of representations which satisfy either the property  $(P_1)$ or the property  $(Q_1)$.

Now, let us prove the statement for general $m$. Let $\Lambda$ be the intersection of all subgroups of index at most $m$ in $\Gamma$. Then $\rho_0(\Lambda)$ is Zariski-dense in $SO(p) \times SO(q)$. Proceeding as above, we obtain an $\R$-open neighborhood $U_\Lambda$ of $\rho_0 \mid_{\Lambda}$ in $X_{\Lambda,G} = \{ \rho \mid_\Lambda \colon \rho \in X_{\Gamma,G} \}$, such that each representation in $U_\Lambda$ either acts irreducibly on $V$ or stabilizes a $p$-dimensional positive subspace $W$ of $V$ and its $q$-dimensional orthogonal complement $W^\perp$ which is a negative subspace of $V$, and does not stabilize any proper nonzero subspace other than $W$ and $W^\perp$. Now, consider the canonical restriction morphism $\pi: X_{\Gamma,G} \to X_{\Delta,G}$, which is clearly $\R$-continuous.

The open neighborhood $U_{\Gamma} \cap \pi^{-1}(U_\Delta)$ of $\rho_0$ is the required neighborhood, where $U_{\Gamma}$ is the open neighborhood about $\rho_0$ chosen above.
\end{proof}

Now, we establish a result which uses results from the representation theory of semisimple Lie algebras over $\R$.

\begin{prop}\label{Rank-irreducible}
Let $H$ be a maximal proper closed subgroup of $G$. Suppose $H$ acts absolutely irreducibly on $V$. Then $\rank H < \rank SO(p) \times SO(q)$. (In particular, $\rank_{\ssim} H < \rank_{\ssim} SO(p) \times SO(q) = \rank SO(p) \times SO(q)$). 
\end{prop}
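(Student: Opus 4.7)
My approach is to complexify the situation and then invoke the classification of maximal irreducible subgroups of the complex orthogonal group $SO(n,\mathbb{C})$, where $n = p+q$.

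First, I would reduce to a semisimple complex setting. Since $H \subseteq SO(V)$ acts absolutely irreducibly on $V$, Schur's lemma applied to $H_{\mathbb{C}}$ forces the connected center $Z(H)^{\circ}$ to act on $V_{\mathbb{C}}$ by scalars preserving $Q$, hence by $\pm I$. So $Z(H)^{\circ}$ is trivial and $H^{\circ}$ is semisimple. After complexification, $H_{\mathbb{C}}^{\circ}$ is a connected semisimple subgroup of $SO(n,\mathbb{C})$ acting irreducibly on $V_{\mathbb{C}}$, and $\rank H = \rank H_{\mathbb{C}}^{\circ}$. Enlarging $H_{\mathbb{C}}^{\circ}$ to a maximal connected closed subgroup $M$ of $SO(n,\mathbb{C})$ that still acts irreducibly on $V_{\mathbb{C}}$, it suffices to prove $\rank M < \lfloor p/2 \rfloor + \lfloor q/2 \rfloor$.

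By Dynkin's classification of maximal closed connected subgroups of the complex classical simple groups acting irreducibly on the natural module, $M$ is either (i) a tensor-product type subgroup $G_{1} \otimes G_{2}$ on $V_{\mathbb{C}} = V_{1} \otimes V_{2}$, with $G_{1}$ and $G_{2}$ both orthogonal or both symplectic, $a_{i} := \dim V_{i} \geq 2$, and $a_{1} a_{2} = n$; or (ii) a simple subgroup of rank $r$ acting on $V_{\mathbb{C}}$ via a non-standard self-dual irreducible representation of dimension $n$. In case (i), $\rank M = \rank G_{1} + \rank G_{2} \leq (a_{1}+a_{2})/2$, and the constraints $a_{1} a_{2} = n$, $a_{i} \geq 2$ yield $a_{1}+a_{2} \leq 2 + n/2$, hence $\rank M \leq 1 + n/4$. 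In case (ii), the Weyl dimension formula (together with the known tables of self-dual irreducible representations of simple Lie algebras) shows that the minimum dimension of a non-standard self-dual irreducible representation of a simple group of rank $r$ grows at least quadratically in $r$, so $r = O(\sqrt{n})$. In both cases, once $p$ and $q$ are sufficiently large, $\rank M$ is strictly less than $\lfloor p/2 \rfloor + \lfloor q/2 \rfloor \geq n/2 - 1$, giving the desired bound. The final assertion about semisimple rank is immediate from $\rank_{\ssim} H \leq \rank H$.

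The main obstacle is handling case (ii): one must rule out, uniformly, the sporadic exceptional embeddings of small simple groups into $SO(n,\mathbb{C})$ (such as $G_{2} \hookrightarrow SO(7)$ and $\mathrm{Spin}(7) \hookrightarrow SO(8)$) by invoking Dynkin's tables. The hypothesis that $p$ and $q$ are sufficiently large is used precisely here, to discard the finitely many low-dimensional configurations in which a proper simple subgroup could otherwise attain a rank approaching $\lfloor p/2 \rfloor + \lfloor q/2 \rfloor$.
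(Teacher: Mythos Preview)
Your strategy is correct and runs parallel to the paper's, but the classification is carried out over $\C$ rather than over $\R$. The paper passes to the Lie algebra $\h$ and invokes Taufik's classification \cite{Ta} of maximal subalgebras of $\so(p,q)$ acting absolutely irreducibly on $V$, splitting into (i) $\h$ absolutely simple, (ii) $\h$ a complex simple algebra regarded over $\R$, and (iii) $\h$ non-simple (four explicit tensor families); for (i) it quotes \cite{GLM} to the effect that a nontrivial irreducible representation of a simple Lie algebra of rank $r$, other than the natural one and its dual, has dimension at least ${r\choose 2}-1$. You instead complexify and apply Dynkin's classification inside $SO(n,\C)$: your tensor bound $\rank M\le 1+n/4$ absorbs the paper's cases (ii) and (iii) in one stroke, and your quadratic-growth statement for non-standard self-dual irreducibles is exactly the content of \cite{GLM}. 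The complexified route is arguably cleaner since Dynkin's tables are more standard than Taufik's real classification; the paper's route has the mild advantage of tracking the real form throughout, which is why case~(ii) appears separately there.

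One step to tighten: Schur's lemma applied to $H$ kills only $Z(H)^\circ$, not $Z(H^\circ)^\circ$, so the conclusion ``$H^\circ$ is semisimple'' needs another line; more importantly, absolute irreducibility of $H$ on $V$ does not by itself force $H_\C^\circ$ to act irreducibly on $V_\C$, and your enlargement to an irreducible maximal $M$ genuinely requires this. (Clifford theory for $H^\circ\trianglelefteq H$, combined with the observation that a nontrivial isotypic decomposition would exhibit $H$ inside the normalizer of a proper parabolic---contradicting maximality of $H$ among subgroups acting irreducibly---closes the gap.) The paper glosses over the same passage with an unjustified ``without loss of generality $\h$ acts absolutely irreducibly on $V$,'' so this is a shared soft spot rather than a defect peculiar to your argument.
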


\begin{proof}

Let $\h$ be the Lie algebra of $H$. Without loss of generality, we may pass on to the level of Lie algebras and suppose that $\h$ acts absolutely irreducibly on $V$. Maximal subalgebras of $\g$ that act absolutely irreducibly on $V$ are either simple or non-simple. First, consider the maximal simple subalgebras $\h$ that are irreducible on $V_\C$. If a Lie algebra $\h$ over $\R$ is simple, then either $\h^{\C}$ is simple over $\C$ or $\h$ is simple Lie algebra over $\C$ considered as an algebra over $\R$. Moreover, in the latter case $\h^{\C} \cong \h \times \h$. First consider the case where $\h$ is simple over $\R$ and $\h^{\C}$ is simple over $\C$. By the proof of \cite[Theorem 1]{GLM}, every non-trivial irreducible representation of a simple Lie algebra of rank $r$, other than the natural representation and its dual, has dimension at least ${r \choose 2} -1$. Since 
$$
\dim V \geq 2 \cdot \lfloor \frac{p+q}{2} \rfloor  \geq 2 \cdot \rank \h
$$
and $\dim V$ is strictly less than ${r \choose 2} -1$ when $r \geq 6$, it follows that $V$ \emph{cannot} be the irreducible representation of $\h$. Now, consider the case where $\h$ is a simple Lie algebra over $\C$ considered as an algebra over $\R$; these subalgebras are given by \cite[Theorem $3$]{Ta}: $\so(m,\C) \subseteq \so(\frac{m(m-1)}{2}, \frac{m(m+1)}{2})$, $\sp(m,\C) \subseteq \so(m(2m-1),m(2m+1))$. These subalgebras have strictly lower rank than the $\rank$ of $\so(p) \oplus \so(q)$.

On the other hand, non-simple subalgebras that act irreducibly on $V_{\C}$ are classified by Taufik \cite[Theorems 1,3,4]{Ta}:
\begin{enumerate}[(a)]
\item 
$\so(p_1,q_1) \otimes I + I \otimes \so(p_2,q_2) $, where $ p = p_1 p_2 + q_1 q_2, q = p_1 q_2 + p_2 q_1$.
\item
$ \so^{*}(2n_1) \otimes I + I \otimes \so^{*}(2n_2)$, where $p = q = 2n_1n_2$.
\item
$\sp(m_1, \R) \otimes I  + I \otimes \sp(m_2,\R)$, where $ p= q = 2m_1m_2$.
\item
$\sp(p_1,q_1) \otimes I  + I \otimes \sp(p_2,q_2)$, where $ p= 4(p_1 p_2 + q_1 q_2) , q = 4(p_1 q_2 + p_2 q_1 )$.
\end{enumerate}

\noindent Recall the assumption (at the beginning of the section) that $p \neq q$, so cases $(b)$ and $(c)$ cannot arise. In the remaining cases $(a)$ and $(d)$, the ranks of the subalgebras are at most $\lfloor \frac{p_1 + q_1}{2} \rfloor + \lfloor \frac{p_2+ q_2}{2} \rfloor $ and $(p_1 + q_1) + (p_2 + q_2)$ respectively, which are clearly strictly less than $\lfloor (p_1+q_1)(p_2+q_2)/2 \rfloor$, when $p_1+q_1, p_2 +q_2$ are sufficiently large.
\end{proof}

\begin{lem} \label{absolute-reducible}
Let $K$ be a proper connected subgroup of $SO(p) \times SO(q)$. Suppose $\rank K = \rank SO(p) \times SO(q)$. Suppose $K$ stabilizes some $p$-dimensional positive subspace $W$ of $V$ and its $q$-dimensional orthogonal complement $W^{\perp}$ which is a negative subspace, and does not stabilize any proper nonzero subspace of $V$ other than $W$ and $W^{\perp}$. Then $K$ acts reducibly either on $W_\C$ or $(W^{\perp})_\C$. (Recall, $V$ denotes the natural representation of $SO(p,q)$).
\end{lem}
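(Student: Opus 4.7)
The plan is to argue by contradiction. Suppose $K$ acts absolutely irreducibly on both $W_\C$ and $(W^\perp)_\C$. Let $K_i = \pi_i(K)$ be the images of $K$ under the coordinate projections $\pi_1 \colon SO(p) \times SO(q) \to SO(p)$ and $\pi_2 \colon SO(p) \times SO(q) \to SO(q)$, and let $\k \subseteq \so(p) \oplus \so(q)$ and $\k_i$ be the corresponding Lie algebras. Absolute irreducibility of $K$ on $W_\C$ (resp.\ $(W^\perp)_\C$) is equivalent to that of $K_1$ on $\C^p$ (resp.\ of $K_2$ on $\C^q$). By Schur, the centralizer of each $K_i$ in the appropriate general linear group is scalar; scalars preserving the invariant orthogonal form satisfy $\lambda^2 = 1$, so the connected center of each $K_i$ is trivial and each $\k_i$ is semisimple. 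From
\[
\rank\so(p) + \rank\so(q) \;=\; \rank\k \;\le\; \rank\k_1 + \rank\k_2 \;\le\; \rank\so(p) + \rank\so(q),
\]
I conclude $\rank\k_1 = \rank\so(p)$ and $\rank\k_2 = \rank\so(q)$.

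The crucial step is to show $\k_1 = \so(p)$ (and symmetrically $\k_2 = \so(q)$), using the same tools as in the proof of Proposition~\ref{Rank-irreducible}. If $\k_1$ is simple of rank $\lfloor p/2\rfloor$, the dimension bound of \cite{GLM} forces the $p$-dimensional absolutely irreducible representation $\C^p$ to be the natural representation or its dual; inspection of root systems then shows that the unique simple Lie algebra of rank $\lfloor p/2\rfloor$ admitting a $p$-dimensional natural representation with a \emph{symmetric} invariant form is $\so(p)$ itself (the rank-matching symplectic candidate $\mathfrak{sp}(p,\C)$ carries a \emph{skew}-symmetric invariant form instead). If $\k_1$ is semisimple but not simple, Taufik's classification \cite{Ta}, specialised to the compact form, restricts $\k_1$ to a tensor-product type $\so(p_1) \oplus \so(p_2)$ with $p = p_1 p_2$, whose rank $\lfloor p_1/2 \rfloor + \lfloor p_2/2 \rfloor$ is strictly less than $\lfloor p/2\rfloor$ for $p$ large, contradicting the maximality of $\rank\k_1$. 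The hypothesis "$p, q$ sufficiently large" is precisely what makes both the \cite{GLM} dimension bound applicable and $\so(p), \so(q)$ simple.

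With $\k_1 = \so(p)$ and $\k_2 = \so(q)$ in hand, Goursat's lemma finishes the proof. Set $I_1 := \k \cap (\so(p) \oplus 0)$ and $I_2 := \k \cap (0 \oplus \so(q))$, which are ideals of $\so(p)$ and $\so(q)$ respectively. Since both ambient algebras are simple, each $I_j$ is either $0$ or the whole factor, and Goursat yields an isomorphism $\so(p)/I_1 \cong \so(q)/I_2$. As $\dim\so(p) \neq \dim\so(q)$ whenever $p \neq q$, the only consistent option is $I_1 = \so(p)$ and $I_2 = \so(q)$, whence $\k = \so(p) \oplus \so(q)$ and $K = SO(p) \times SO(q)$, contradicting the properness of $K$. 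The main obstacle in the entire argument is the middle step of identifying $\k_1$ and $\k_2$ precisely; the Goursat reduction is routine once that is secured.
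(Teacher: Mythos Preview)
Your argument is correct, and the overall architecture---project to each factor, show the projections are all of $\so(p)$ and $\so(q)$, then finish with Goursat---matches the paper's. The difference is in how you force $\k_1 = \so(p)$. You recycle the machinery of Proposition~\ref{Rank-irreducible}: the \cite{GLM} dimension bound for simple algebras plus Taufik's tensor-product classification for non-simple ones, together with the full-rank constraint. The paper instead invokes the Borel--de Siebenthal classification of connected subgroups of maximal rank in $SO(p)$ (via the tables in \cite{OV}): up to conjugacy these are $SO(k)\times SO(p-k)$ and, when $p$ is even, $U(p/2)$. The first visibly acts reducibly on $W$; the second acts irreducibly over $\R$ but its centraliser contains the circle, so it is not \emph{absolutely} irreducible. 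Descending one level further inside $U(p/2)$ (whose maximal full-rank subgroups are $U(k)\times U(p/2-k)$, again reducible) finishes the case analysis. This route is shorter and avoids the representation-theoretic bookkeeping; your route has the virtue of reusing tools already in play. One small gap in your sketch: when you specialise Taufik's list to the compact form you mention only the orthogonal tensor case $\so(p_1)\oplus\so(p_2)$, but case~(d) with $q_1=q_2=0$ also contributes the symplectic tensor case $\sp(p_1)\oplus\sp(p_2)$ with $p=4p_1p_2$. Its rank $p_1+p_2$ is still far below $\lfloor p/2\rfloor$ for $p$ large, so the omission is harmless, but it should be acknowledged.
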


\begin{proof}
For ease of reference, let us refer to a subgroup of $SO(p) \times SO(q)$ of rank equal to the $\rank$ of $SO(p) \times SO(q)$, as a subgroup of \textit{full rank} in $SO(p) \times SO(q)$.

From \cite[Table 5 and Table 6]{OV}, it follows that up to conjugacy the only maximal proper connected subgroups in $SO(p)$ of full rank are $SO(k) \times SO(p-k)$, and $U(p/2)$ when $p$ is even. Clearly, subgroups $SO(k) \times SO(p-k)$ act reducibly on $W$, while the latter subgroup $U(p/2)$ acts irreducibly on $W$, and therefore the non-maximal subgroups of full rank of $SO(p)$ that act irreducibly on $W$ must be contained in $U(p/2)$ (up to conjugacy). Furthermore, the only maximal connected subgroups of full rank in $U(p/2)$ are of the form $U(k) \times U(p/2-k)$; clearly, they act reducibly on $W$, so it follows then that there are no non-maximal proper connected subgroups of $SO(p)$ of full rank that also act irreducibly on $W$.

Now, let $H := SO(p) \times SO(q)$. Let $K$ be a proper subgroup of $H$ of full rank. Consider the canonical projections $\pi_1 : H \to SO(p)$ and $\pi_2 : H \to SO(q)$.
If the image of $K$ under $\pi_1$, resp. $\pi_2$,  is a proper subgroup of $SO(p)$, resp., $SO(q)$, then it follows (by the above paragraph) that $K$ cannot act absolutely irreducibly on $W$, resp. $W^\perp$. On the other hand, if $K$ maps surjectively onto both factors $SO(p)$ and $SO(q)$, then, noting that $SO(p)$ and $SO(q)$ are simple, it follows by Goursat's lemma \footnote{Goursat's Lemma: Let G,G' be groups, and let H be a subgroup of $G \times G'$ such that the two projections $p_1: H \rightarrow G$ and $p_2 : H \rightarrow G'$ are surjective. Let $N$ be the kernel of $p_2$ and $N'$ the kernel of $p_1$. Clearly, then $N$ may be identified as a normal subgroup of $G$, and $N'$ as a normal subgroup of $G'$. Then the image of $H$ in $G/N \times G'/N'$ is the graph of an isomorphism $G/N \cong G'/N'$.} that $K$ must be $SO(p) \times SO(q)$ (recall the assumption $p \neq q$ and $p,q$ sufficiently large).
\end{proof}

\begin{lem}\label{det-trick}
Let $\rho_0$ be a point  $X_{\Gamma,G}$ with Zariski-dense image in $SO(p) \times SO(q)$. Let $\rho_1, \rho_2, \ldots$ be a sequence of points in $X_{\Gamma,G}$ converging to $\rho_0$. Suppose that for each $n$, $\rho_n$ is conjugate to either $SO(p) \times SO(q)$ or $S(O(p) \times O(q))$. Then, there exists an integer $N$ such that for each $n \geq N$,  $\rho_n$ is conjugate to $SO(p) \times SO(q)$.
\end{lem}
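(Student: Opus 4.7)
The plan is to argue by contradiction. Suppose no such $N$ exists; then, after passing to a subsequence (still denoted $\rho_n$), we may assume each $\rho_n$ is conjugate to $S(O(p)\times O(q))$ but not to $SO(p)\times SO(q)$. The strategy is to attach to each $\rho_n$ a canonical $p$-dimensional positive subspace $W_n\subset V$ together with a $\pm 1$-valued character $\chi_n\colon \Gamma\to\{\pm 1\}$ detecting which connected component of $S(O(p)\times O(q))$ an element lies in, and then to use the convergence $\rho_n\to\rho_0$ to force $\chi_n$ to be trivial for $n$ large.

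For each $n$, the identity component of the Zariski closure $\overline{\rho_n(\Gamma)}$ is a conjugate of $SO(p)\times SO(q)$; since $p<q$ with $p,q\geq 3$, its natural representations on the positive and negative parts of $V$ are irreducible and mutually inequivalent, so it stabilizes a \emph{unique} $p$-dimensional positive subspace $W_n\subset V$ (with $q$-dimensional negative complement $W_n^\perp$). Since $S(O(p)\times O(q))$ normalizes its identity component, all of $\overline{\rho_n(\Gamma)}$ also stabilizes $W_n$. By compactness of the real Grassmannian $Gr(p,V)$, pass to a further subsequence with $W_n\to W_\infty$. Applying Lemma \ref{compactness}(1) to each of the finitely many generators of $\Gamma$ shows that $\rho_0$ stabilizes $W_\infty$; since $\rho_0(\Gamma)$ is Zariski-dense in $SO(p)\times SO(q)$, its only proper nonzero invariant subspaces are $W$ and $W^\perp$, and $\dim W_\infty=p$ forces $W_\infty=W$.

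Because $W_n$ is positive and $\rho_n(\gamma)$ preserves $B$, the restriction $\rho_n(\gamma)|_{W_n}$ is orthogonal with respect to the positive-definite form $B|_{W_n}$, so $\det(\rho_n(\gamma)|_{W_n})\in\{\pm 1\}$. Set
$$\chi_n(\gamma):=\det\bigl(\rho_n(\gamma)|_{W_n}\bigr);$$
this is a homomorphism $\Gamma\to\{\pm 1\}$ that is nontrivial precisely when $\overline{\rho_n(\Gamma)}$ is not contained in its identity component, i.e.\ precisely under our contradiction hypothesis. Working in the Pl\"ucker embedding $Gr(p,V)\hookrightarrow\P(\bigwedge^p V)$ and choosing unit representatives $\omega_n\in\bigwedge^p V$ of $W_n$ with consistent orientation, one has $\bigwedge^p\rho_n(\gamma)\cdot\omega_n=\chi_n(\gamma)\,\omega_n$; since $\rho_n(\gamma)\to\rho_0(\gamma)$ and $\omega_n\to\omega$ (a representative of $W$), passing to the limit yields $\chi_n(\gamma)\to\det(\rho_0(\gamma)|_W)=+1$ for each fixed $\gamma$. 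Fixing a finite generating set $\gamma_1,\dots,\gamma_k$ of $\Gamma$ and using that $\chi_n$ is $\pm 1$-valued, we find $N$ with $\chi_n(\gamma_i)=1$ for all $i$ and all $n\geq N$; hence $\chi_n\equiv 1$ for $n\geq N$, contradicting the contradiction hypothesis.

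The main obstacle is the careful tracking of the moving subspace $W_n$ along the sequence and the continuity of the restricted determinant as $W_n$ varies; this is handled cleanly via the Pl\"ucker coordinates, and everything else in the argument is orthogonal-group bookkeeping together with the closedness statement of Lemma \ref{compactness}.
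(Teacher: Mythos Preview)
Your proof is correct and takes a genuinely different route from the paper's. The paper works entirely with conjugation-invariant data: it observes that any element lying in the non-identity component of a conjugate of $S(O(p)\times O(q))$ has some subset of eigenvalues with product $-1$ (namely those coming from the $p\times p$ block), and then uses that for any index-$2$ subgroup $\Gamma_2\subset\Gamma$ the image $\rho_0(\Gamma_2)$ is dense in the compact group $SO(p)\times SO(q)$ in the \emph{real} topology to manufacture, for each coset representative $\gamma$, some $\gamma_2\in\Gamma_2$ with $\rho_0(\gamma\gamma_2)$ arbitrarily close to the identity, so that every eigenvalue-subset product is near $1$; continuity of eigenvalues then gives the contradiction. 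Your argument instead tracks the moving invariant $p$-plane $W_n$ through the compact Grassmannian and passes the determinant character $\chi_n$ to the limit via Pl\"ucker coordinates. The paper's approach never names the conjugating element or the invariant subspace, at the cost of the real-density input and the ``some subset of eigenvalues'' bookkeeping; your approach is more direct and uses only compactness of $Gr(p,V)$ together with finite generation of $\Gamma$, but needs the (easy) uniqueness of the invariant $p$-plane for a conjugate of $SO(p)\times SO(q)$. Two minor remarks: your appeal to Lemma~\ref{compactness}(1) is really to the continuity statement used in its proof (if $g_n\to g$, $W_n\to W$, and $g_nW_n=W_n$, then $gW=W$) rather than to the lemma as stated; and the positivity of $W_n=g\cdot\R^p$ is immediate since $g\in SO(p,q)$ preserves $B$, so your justification of $\det(\rho_n(\gamma)|_{W_n})\in\{\pm1\}$ is fine.
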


\begin{proof}

For a subgroup $M$ of $G$, let $M^{x} := x M x^{-1}$.  Let $K := S(O(p) \times O(q))$ and let $H := K^\circ = SO(p) \times SO(q)$. 

\begin{claim}\label{claim 1}
Let $\Gamma_2$ be a subgroup of index $2$ in $\Gamma$. Let $\gamma \in \Gamma \setminus \Gamma_2$ represent the (unique) non-identity element of the quotient group $\Gamma/\Gamma_2$. Given $\epsilon > 0$, there exists $\gamma_2 \in \Gamma_2$ such that the product of any subset of the eigenvalues of $\rho_0(\gamma \gamma_2)$ are within an $\epsilon$-distance of $1$.
\end{claim}
\begin{proof}
Since $H$ is Zariski-connected and $\rho_0(\Gamma)$ is Zariski-dense in $H$, we have
$$
\overline{\rho_0(\Gamma_2)} = \overline{\rho_0(\Gamma)} = H .
$$
Let $g := \rho_0(\gamma^{-1})$. Then for any $\R$-open neighborhood $U$ of the identity (matrix) $I_{p+q}$ of $H$, we have $g U \cap \rho_0(\Gamma_2) \neq \emptyset$ (a Zariski-dense subgroup  of $G(\R)$, where $G(\R)$ is compact, is also dense in the real topology.) Then, there exists $\gamma_2 \in \Gamma_2$ such that $\rho_0(\gamma_2) \in gU$ ($ x \in U$), and we have $ x: = g^{-1} \rho_0(\gamma_2)  \in U.$
Now, we may choose $U$ such that the eigenvalues of $x$ are within a distance of $\epsilon$ to the eigenvalues of the identity matrix $I_{p+q}$ which are all equal to $1$ (the eigenvalues of a matrix are continuous functions in the entries of the matrix). Therefore the product of any subset of them are also sufficiently close to $1$, and the claim follows.
\end{proof}

\begin{claim}\label{claim 2}
Let $\rho_1, \rho_2, \ldots, $ be a convergent sequence in $X_{\Gamma,G}$. Suppose that for each $i$, $\overline{\rho(\Gamma_i)} = K^{x_i}$ for some $x_i \in G$. Then there exists a convergent subsequence $(\rho_{n_i})_i$ of the sequence $(\rho_i)_i$, a subgroup $\Gamma_2$ of index $2$ in $\Gamma$, and an element $\gamma \in \Gamma \setminus \Gamma_2$ such that, for each $\gamma_2 \in \Gamma_2$ there exists a subset of eigenvalues of $\rho_{n_i}(\gamma \gamma_2)$ whose product is $-1$.
\end{claim}

\begin{proof}
For each $i$, consider $\rho_i^{-1}(H^{x_i})$ which is a subgroup of index $2$ in $\Gamma$. As there are only finitely many subgroups of index $2$ in $\Gamma$, there are infinitely many $i$ such that 
$$
\rho_i^{-1}(H^{x_i}) = \Gamma_2
$$
for some (fixed) subgroup $\Gamma_2$ of index $2$ in $\Gamma$. Passing onto a convergent subsequence and after re-indexing if necessary, we can assume that the same holds for each $i$.  Let $\gamma \in \Gamma \setminus \Gamma_2$ represent the (unique) non-identity element in the quotient group $\Gamma / \Gamma_2$. Then for each $\gamma_2 \in \Gamma_2$, we have $\rho_i(\gamma \gamma_2) \in K^{x_i} \setminus H^{x_i}$. It follows that 
$x_i^{-1} \rho_i(\gamma \gamma_2) x_i$ (which is in $ K \setminus H$) is of the form 
$$
x_i^{-1} \rho_i(\gamma \gamma_2) x_i  = 
\begin{bmatrix}
A&0\\
0&B
\end{bmatrix}, \; \textrm{where} \; \det(A) = \det(B) = -1.
$$
Clearly then, for each $i$, there exists some subset of eigenvalues of $\rho_i(\gamma \gamma_2)$ whose product is $-1$. The claim follows.
\end{proof}

Now, the proof of lemma follows from Claim \ref{claim 1} and Claim \ref{claim 2}. Suppose on the contrary that for each $i \geq 1$ there exists some $n_i \geq i$ such that $\rho_{n_i}$ is conjugate to $K$. Then, by Claim \ref{claim 2} there exists some element $\gamma \in \Gamma \setminus \Gamma_2$ such that for each $\gamma_2 \in \Gamma_2$, $\rho_{n_i}(\gamma \gamma_2)$ has some subset of eigenvalues whose product is $-1$ (we may ignore the process of passing on to a convergent subsequence). Now $\rho_0(\gamma \gamma_2)$ also has some subset of eigenvalues whose product is $-1$, contradicting Claim \ref{claim 1}.
\end{proof}

\noindent Now, we prove the main result of this section. 
\begin{thm}\label{Deformation-point-II}
Let $\rho_0: \Gamma \to G$ be a representation with dense image in $SO(p) \times SO(q)$. Then $\rho_0$ is a nonsingular point. Moreover, there is an $\R$-open neighborhood around $\rho_0$ contained in the (unique) irreducible component to which $\rho_0$ belongs, such that each point $\rho$ in this neighborhood satisfies precisely one of the following conditions:

\begin{enumerate}[(a)]
 \item
 $\rho$ is dense in $G$.
\item 
$\rho$ is conjugate to $SO(p) \times SO(q)$.

\end{enumerate}
\end{thm}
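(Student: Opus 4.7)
The nonsingularity of $\rho_0$ and the existence of an $\R$-open neighborhood diffeomorphic to $\R^d$ lying in a unique irreducible component are immediate from Proposition~\ref{R-open-nbhd}, so the task is to produce a neighborhood on which each $\rho$ satisfies exactly one of $(a)$, $(b)$. The plan is to combine Proposition~\ref{Deformation-point} with the avoidance machinery of Proposition~\ref{Maximal-avoidance} and the structural results Lemma~\ref{absolute-reducible} and Lemma~\ref{det-trick}. First I fix a positive integer $m$ large enough that (i) $m \ge 2$, so that the subgroup $\Delta$ of $\Gamma$ equal to the intersection of all index-$\le m$ subgroups distinguishes $SO(p)\times SO(q)$ from $S(O(p)\times O(q))$ as required by Lemma~\ref{det-trick}, and (ii) $m$ exceeds a uniform bound on $[K:K^\circ]$ as $K$ ranges over closed reductive subgroups of $G\subseteq \GL_{p+q}(\R)$, furnished by (an elaboration of) Proposition~\ref{larsen}. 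With such $m$, Proposition~\ref{Deformation-point} provides an $\R$-open neighborhood $U$ of $\rho_0$ in which every $\rho$ satisfies $(P_m)$ or $(Q_m)$. Moreover $\rho(\Delta)\subseteq K^\circ$ for $K := \overline{\rho(\Gamma)}$, and combined with the standard inclusion $K^\circ\subseteq \overline{\rho(\Delta)}$ (valid because $\Delta$ has finite index in $\Gamma$), this yields $\overline{\rho(\Delta)} = K^\circ$.

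In the $(P_m)$-branch, I aim to show $\rho$ is dense in $G$. Suppose not; then $K\subseteq H$ for some maximal proper closed subgroup $H$ of $G$. Since $\rho|_\Delta$, and therefore $K$ and $H$, act absolutely irreducibly on $V$, the subgroup $H$ must be reductive. Lemma~\ref{finite-semisimple} then bounds the conjugacy classes of such $H$, and Proposition~\ref{Rank-irreducible} ensures each has $\rank H<\rank(SO(p)\times SO(q))=\rank \overline{\rho_0(\Gamma)}$. An application of Proposition~\ref{Maximal-avoidance} supplies a word $\gamma\in\Gamma$ such that $\chi(\rho_0(\gamma))$ lies in the $\R$-open complement of the subvariety $\mathcal{S}$ constructed in its proof. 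By $\R$-continuity of $\chi\circ\operatorname{ev}_\gamma$, the same holds on a smaller neighborhood of $\rho_0$, which contradicts $\rho(\gamma)\in gHg^{-1}$ for any $H$ in the finite list.

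In the $(Q_m)$-branch, $\rho|_\Delta$ stabilises a decomposition $V=W\oplus W^\perp$ with $W$ positive $p$-dimensional and $W^\perp$ negative $q$-dimensional, and since $p\neq q$ the full group $\rho(\Gamma)$ cannot swap the two factors, so $K\subseteq S(O(W)\times O(W^\perp))$. The identity $\overline{\rho(\Delta)}=K^\circ$ arranged above transfers the absolute irreducibility on $W_\C$ and $W^\perp_\C$ and the no-other-invariant-subspace property from $\rho|_\Delta$ to $K^\circ$. I now rule out $K^\circ\subsetneq SO(W)\times SO(W^\perp)$ in two subcases. If $\rank K^\circ<\rank(SO(p)\times SO(q))$, I invoke Proposition~\ref{Maximal-avoidance} again, this time to the finite list of conjugacy classes of proper reductive subgroups of $S(O(W)\times O(W^\perp))$ of strictly smaller rank, shrinking $U$ to exclude this possibility. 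If instead $K^\circ$ has full rank, Lemma~\ref{absolute-reducible} forces reducibility of $K^\circ$ on $W_\C$ or $W^\perp_\C$, directly contradicting the absolute irreducibility just obtained. Hence $K^\circ=SO(W)\times SO(W^\perp)$, so $K$ is conjugate to $SO(p)\times SO(q)$ or to $S(O(p)\times O(q))$, and Lemma~\ref{det-trick} eliminates the latter on a possibly smaller sub-neighborhood of $\rho_0$.

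The principal difficulty I expect is the interplay between $K$ and $K^\circ$ in Case $(Q_m)$: the hypotheses of $(Q_m)$ are statements about $\rho|_\Delta$, not about $K^\circ$ directly, and the bridge is the identity $\overline{\rho(\Delta)}=K^\circ$, which depends on a uniform bound on the component group $K/K^\circ$ as $K$ ranges over closed reductive subgroups of $G\subseteq \GL_{p+q}$. This is exactly the role of the Jordan-type result Proposition~\ref{larsen} proved in \S\ref{jordan}; arranging this bound and confirming that a single $m$ works simultaneously for all $\rho\in U$ is the principal technical input needed to close the argument.
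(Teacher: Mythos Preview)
Your overall architecture matches the paper's, but there is a genuine gap in the bridge you build between $\rho|_\Delta$ and $K^\circ$. You claim that $m$ can be chosen to exceed a uniform bound on $[K:K^\circ]$ as $K$ ranges over closed reductive subgroups of $G$, citing ``an elaboration of Proposition~\ref{larsen}.'' No such bound exists: a cyclic subgroup of order $N$ inside $SO(2)\subset SO(p,q)$ is reductive with $[K:K^\circ]=N$. Proposition~\ref{larsen} does \emph{not} bound the component group; it only produces an open subgroup $H'$ of index $\le J(n)$ of the special form $H'=Z(H')[H^\circ,H^\circ]$. The paper uses exactly this: with $m=J(p+q)$ one gets $\rho(\Lambda)\subseteq H'$, and then, because $Z(H')$ centralises $\overline{\rho(\Lambda)}$ and hence acts by scalars (Schur), absolute irreducibility of $\overline{\rho(\Lambda)}$ forces $[H^\circ,H^\circ]$ to act absolutely irreducibly. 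That is the correct mechanism, and your shortcut $\overline{\rho(\Delta)}=K^\circ$ cannot replace it.

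This matters most in your $(Q_m)$ branch, where you rely on $\overline{\rho(\Delta)}=K^\circ$ to transfer absolute irreducibility on $W_\C$ and $W^\perp_\C$ to the connected group $K^\circ$ so that Lemma~\ref{absolute-reducible} applies. Without the false identity you only know $K^\circ\subseteq\overline{\rho(\Delta)}$, which is the wrong direction. A second issue in the same branch: when $\rank K^\circ<\rank\bigl(SO(p)\times SO(q)\bigr)$ you invoke Proposition~\ref{Maximal-avoidance} on ``the finite list of conjugacy classes of proper reductive subgroups of $S(O(W)\times O(W^\perp))$ of strictly smaller rank,'' but no such finiteness is available a priori. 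The paper handles the rank deficit by a sequence argument: assuming $\rho_n\to\rho_0$ with $\rank_{\ssim}\rho_n<\rank_{\ssim}\rho_0$, it first uses the Jordan-type argument above to ensure each $[\,\overline{\rho_n(\Gamma)}^\circ,\overline{\rho_n(\Gamma)}^\circ\,]$ acts absolutely irreducibly, then appeals to Lemma~\ref{finite-semisimple} to extract a finite list of semisimple candidates, and only then applies Proposition~\ref{Maximal-avoidance}. Your $(P_m)$ branch is essentially sound (and pleasantly direct), though the finiteness you need there is Proposition~\ref{maximals}, not Lemma~\ref{finite-semisimple}.
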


\begin{proof}
By Proposition \ref{R-open-nbhd}, the representation $\rho_0$ is a nonsingular point and there exists an $\R$-open neighborhood $U$ of $\rho_0$ diffeomorphic to $\R^{d}$, where $d$ is the dimension of the Zariski-tangent space at $\rho_0$. Let $H$ be a maximal proper closed parabolic subgroup of $G$, and let $D$ be an irreducible component of the representation variety $X_{\Gamma,H}$. By \cite[Proposition 3.1]{LL}, the condition on $\rho \in X_{\Gamma,G}$ that $\rho$ is \emph{not} contained in any $G$-conjugate of $D$ is open in the real topology.  As $\rho_0$ is not contained in any parabolic subgroup of $G$, it follows from Proposition \ref{maximals} that we may assume by shrinking $U$ if necessary that the neighborhood $U$ of $\rho_0$ contains representations with image either dense in $G$ or contained in some maximal proper reductive subgroup of $G$. 

Let $m = J(p+q)$ be the constant given by Theorem \ref{larsen}. Then, by Proposition \ref{Deformation-point} each representation in $U$ may be assumed (after shrinking $U$ if necessary) to satisfy either property $(P_m)$ or property $(Q_m)$. 

First, consider those representations $\rho$ in $U$ that satisfy property $(P_m)$. Then $H := \overline{\rho(\Gamma)}$ acts absolutely irreducibly on $V$, hence $H$ must be reductive. We claim that its derived group $[H^\circ,H^\circ]$ also acts absolutely irreducibly on $V$.  As $H$ acts absolutely irreducibly on $V$, we may extend scalars to $\C$ and consider $H$ and $V$ over $\C$. By Theorem \ref{larsen}, there exists a subgroup $K$ of $H$ such that 
$$
 K =  [H^\circ, H^\circ]Z(K)
$$
and the index $[H:K] \leq J(n)$, where $J(n)$ is a constant that depends only on $n$. Let $\Lambda$ be the intersection of all subgroups of index at most $m$ in $\Gamma$. Since $\rho$ satisfies the property $(P_m)$, the closed subgroup $\overline{\rho(\Lambda)}$ also acts absolutely irreducibly on $V$. Moreover, since $\Lambda$ is of finite index in $\Gamma$ and $\rho(\Lambda) \subseteq K$,  $\overline{\rho(\Lambda)}$ contains $H^\circ$: $H^\circ \subseteq \overline{\rho(\Lambda)} \subseteq K$. Therefore,
$$
 [H^\circ,H^\circ] \subseteq \overline{[\rho(\Lambda),\rho(\Lambda)]} \subseteq [K,K] = [H^\circ,H^\circ].
$$

Let $L := \overline{[\rho(\Lambda),\rho(\Lambda)]}$. Then $L = [H^\circ,H^\circ]Z(L)$. Since $Z(L)$ acts by scalars on $V$ (Schur's lemma), any proper subrepresentation of $[H^\circ,H^\circ]$ is also a proper sub-representation of $L$. It follows that $[H^\circ,H^\circ]$ acts irreducibly on $V$. The claim follows.

Now, let $\rho_1, \rho_2, \ldots $ be a sequence of representations in $U$ converging to $\rho_0$ such that for each $n \geq 1$, $\rho_n$ satisfies the property $(P_m)$. We claim that there exists some positive integer $N$ such that for each $n \geq N$, $\rank \rho_n \geq \rank \rho_0$. Suppose on the contrary that for each $N \geq 1$, there exists some $n_N \geq N$ such that $\rank_{\ssim} \rho_{n_N}  < \rank_{\ssim} \rho_0$. By passing onto a convergent subsequence and after changing the notation, we may assume that for each $n \geq 1$, $\rank_{\ssim} \rho_n < \rank_{\ssim} \rho_0$. Note that $\overline{\rho_i(\Gamma)}$ is reductive for each $i$, since a linear algebraic group with a faithful irreducible representation is reductive. Let $\mathcal{D}\rho_i:= \overline{[\rho_i(\Gamma), \rho_i(\Gamma)]} = [\overline{\rho_i(\Gamma)}, \overline{\rho_i(\Gamma)}]$ which is semisimple. As $\rho_i$ acts absolutely irreducibly on $V$, it follows  that $\mathcal{D}\rho_i$ also acts absolutely irreducibly on $V$. By Proposition \ref{finite-semisimple}, there are finitely many possibilities $\{ S_1, \ldots, S_r \}$ for  $\mathcal{D} \rho_i$ up to conjugation. Consider the characteristic morphism $\chi : G \to \A^{p+q-1}_\C$ (Refer to the discussion preceding Lemma \ref{Image-rank}.) It follows from Proposition \ref{Maximal-avoidance} that there exists an element $\gamma \in \Gamma$ such that $\chi(\rho_0(\gamma)) \in \overline{\chi(\rho_0(SO(p) \times SO(q))} \setminus \bigcup_{i=1}^{r}\overline{\chi(S_i)}$, so that  $\rho_n(\gamma)$ cannot converge to $\rho_0(\gamma)$. Therefore, there exists $N \geq 1$ such that for all $n \geq N$, $\rank_{\ssim} \rho \geq \rank_{\ssim} \rho_0$. In other words, there is an open neighborhood of $\rho_0$ consisting of representations $\rho$ such that $\rank_{\ssim} \rho \geq \rank_{\ssim} \rho_0$. Shrinking $U$ further if necessary, we may suppose that this property on rank holds for representations in $U$.  By Proposition \ref{Rank-irreducible}, representations in $U$ that satisfy the property  $(P_m)$ must have rank strictly less than the $\rank SO(p) \times SO(q)$ unless they are dense in $G$. It follows that representations that such representations are indeed dense in $G$. 

Now consider the representations $\rho$ in $U$ that satisfy the property $(Q_m)$.  By Lemma \ref{absolute-reducible}, $\overline{\rho(\Gamma)}$ cannot be a contained in a \emph{proper} closed subgroup of $S(O(p) \times O(q))$ or its conjugate, other than $SO(p) \times SO(q)$ or its conjugate. By Lemma \ref{det-trick} we may further shrink $U$ if necessary, so that representations in $U$ that satisfy the property $(Q_m)$ are conjugate to $SO(p) \times SO(q)$.

\end{proof}

\section{Main Theorem}\label{main-thm}

\begin{lem}\label{tangent-space}
Let $T$ be a semisimple operator of finite order  acting on an $n$-dimensional complex vector space $V$. Let $S$ be the set of eigenvalues of $T$ and $t_{\lambda}$ be the multiplicity of the eigenvalue $\lambda \in S$. The dimension of eigenspace corresponding to the eigenvalue $1$ under the induced action of $T$ on $\wedge^{2} V$ is given by,
\begin{equation}\label{equ 4.1}
\dim (\wedge^{k} V)^T = {t_1 \choose 2 } + {t_{-1} \choose 2 } +
\frac{1}{2}\sum_{\lambda \in S - \{ 1, -1\} } t_{\lambda} \cdot  t_{\bar{\lambda}}.
\end{equation}
Note, $t_1 + t_{-1} + \sum_{\lambda \in S - \{ 1, -1\} } t_{\lambda} \cdot  t_{\overline{\lambda}} = n$.
\end{lem}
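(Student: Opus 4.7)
\medskip

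The plan is a direct eigenvalue bookkeeping on $\wedge^2 V$ coming from an eigenbasis of $T$. First, since $T$ is semisimple of finite order acting on a complex vector space, every eigenvalue of $T$ is a root of unity and one has an orthogonal decomposition
\begin{equation*}
V = \bigoplus_{\lambda \in S} V_\lambda, \qquad \dim V_\lambda = t_\lambda,
\end{equation*}
where $V_\lambda$ is the $\lambda$-eigenspace. Note that $\lambda^{-1} = \bar\lambda$ for every $\lambda \in S$.

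Next, I would use the standard decomposition of the second exterior power induced by a direct sum:
\begin{equation*}
\wedge^{2} V \;=\; \bigoplus_{\lambda \in S} \wedge^{2} V_\lambda \;\oplus\; \bigoplus_{\{\lambda,\mu\} \subset S,\; \lambda \neq \mu} V_\lambda \otimes V_\mu.
\end{equation*}
Since $T$ acts by the scalar $\lambda$ on $V_\lambda$, it acts by $\lambda^{2}$ on $\wedge^{2} V_\lambda$ and by $\lambda\mu$ on $V_\lambda \otimes V_\mu$. The $1$-eigenspace of $T$ on $\wedge^{2} V$ is therefore the direct sum of those summands on which $T$ acts trivially.

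The third step is to identify the trivial summands. A summand $\wedge^{2} V_\lambda$ contributes iff $\lambda^{2} = 1$, i.e.\ $\lambda \in \{1,-1\}$, and then its dimension is $\binom{t_\lambda}{2}$; this yields the first two terms $\binom{t_1}{2} + \binom{t_{-1}}{2}$. A summand $V_\lambda \otimes V_\mu$ with $\lambda \ne \mu$ contributes iff $\lambda\mu = 1$; because $\lambda$ and $\mu$ are roots of unity this is the condition $\mu = \bar\lambda$, and the constraint $\lambda \ne \mu$ rules out precisely $\lambda \in \{1,-1\}$. Each such unordered pair $\{\lambda,\bar\lambda\}$ contributes $t_\lambda\, t_{\bar\lambda}$; since summing over ordered $\lambda \in S \setminus \{1,-1\}$ counts each pair twice, one obtains the factor $\tfrac{1}{2}$ in front of $\sum_{\lambda \in S \setminus \{1,-1\}} t_\lambda t_{\bar\lambda}$.

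Adding these contributions gives the formula (\ref{equ 4.1}). There is no real obstacle: the only point that needs a moment of care is that, without the finite-order hypothesis, the identification $\lambda^{-1} = \bar\lambda$ (and the resulting pairing of eigenspaces by complex conjugation) could fail; once $T$ has finite order so that all eigenvalues lie on the unit circle, everything is transparent.
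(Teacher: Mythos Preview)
Your proof is correct and follows essentially the same approach as the paper: both arguments diagonalize $T$, decompose $\wedge^{2}V$ according to the eigenspace decomposition of $V$, and pick out the summands on which $T$ acts trivially. The paper phrases this via an explicit eigenbasis while you use the block decomposition $\wedge^{2}V \cong \bigoplus_{\lambda}\wedge^{2}V_\lambda \oplus \bigoplus_{\{\lambda,\mu\}} V_\lambda\otimes V_\mu$, but the content is identical.
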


\begin{proof}
Let $V_\lambda$ be the eigenspace corresponding to the eigenvalue $\lambda$.  For each eigenvalue $\lambda$, fix a basis $ v^{\lambda}_1, \ldots, v^{\lambda}_{t_{\lambda}} $ of
$V_{\lambda}$.

The lemma follows from the observation that vectors of the form $ v^{1}_{i} \wedge v^{1}_{j}$ where $1 \leq i < j \leq t_1$,  $v^{-1}_{i} \wedge v^{-1}_{j}$ where $1 \leq i < j \leq t_{-1}$, and $ v^{\lambda}_{i} \wedge v^{\overline{\lambda}}_{j}$ where $1 \leq i \leq t_\lambda$ and $1 \leq j \leq t_{\overline{\lambda}}$ and $i \leq j$, constitute a basis of $+1$-eigenspace of $(\wedge^2 V)^T$.
\end{proof}

\begin{cor}\label{error-term}

Let $x_1, x_2, \ldots, x_m$ be a set of elements of (finite) order $d_1, \ldots ,d_m$ in $SO(p,q)$. Suppose that the multiplicity of each eigenvalue of $x_i$ under the adjoint representation of $SO(p,q)$ is $\frac{p+q}{d_j} + O(1)$, where the implicit constant depends only on $d_i$. Then,
\[
\sum_{j =1 }^{m} \left( \frac{\dim \so(p,q)}{d_j} - \dim \so(p,q)^{\langle x_j \rangle} \right) = - \; O(p+q).
\]
where the implicit constant depends only on $d_1,\ldots,d_m$.
\end{cor}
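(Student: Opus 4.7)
The plan is to identify the adjoint representation of $SO(p,q)$ with an exterior power of the natural representation $V=\R^{p+q}$, and then combine Lemma~\ref{tangent-space} with a routine asymptotic expansion.

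First, I would use the isomorphism $\so(p,q)\cong\wedge^{2}V$ of $SO(p,q)$-modules. This identification is equivariant for the conjugation action of any element of the group, so
\[
\dim\so(p,q)^{\langle x_j\rangle}=\dim(\wedge^{2}V_{\C})^{x_j}.
\]
The right-hand side is precisely the quantity that Lemma~\ref{tangent-space} evaluates, applied to the operator $T=x_j$ on the complexified natural representation $V_{\C}$.

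Next, let $t_\lambda$ denote the multiplicity of the eigenvalue $\lambda$ of $x_j$ on $V_{\C}$. The hypothesis gives $t_\lambda=(p+q)/d_j+O(1)$ for each $d_j$-th root of unity $\lambda$, with the implicit constant depending only on $d_j$. Expanding termwise,
\[
\binom{t_\lambda}{2}=\tfrac{1}{2}(p+q)^{2}/d_j^{2}+O(p+q),\qquad t_\lambda t_{\bar\lambda}=(p+q)^{2}/d_j^{2}+O(p+q).
\]
Substituting into the formula of Lemma~\ref{tangent-space} and grouping the $(p+q)^{2}/d_j^{2}$ contributions, a short parity check shows that the total weight on this dominant term is $d_j/2$: in the even case $\tfrac{1}{2}+\tfrac{1}{2}+(d_j-2)/2=d_j/2$, and in the odd case $\tfrac{1}{2}+(d_j-1)/2=d_j/2$. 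Therefore
\[
\dim\so(p,q)^{\langle x_j\rangle}=\frac{(p+q)^{2}}{2\,d_j}+O(p+q).
\]

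Finally, since $\dim\so(p,q)=\tfrac{1}{2}(p+q)(p+q-1)$, we also have $\dim\so(p,q)/d_j=(p+q)^{2}/(2d_j)+O(p+q)$. The leading $(p+q)^{2}/(2d_j)$ terms cancel in each summand of the corollary, leaving a contribution of size $O(p+q)$ per index. Summing over the fixed (depending only on $\Gamma$) number $m$ of indices $j=1,\ldots,m$ preserves the bound, yielding $-O(p+q)$ overall. The entire argument is essentially bookkeeping; the only step requiring genuine care is the parity-independent cancellation of the leading coefficient, which is settled by the case check above.
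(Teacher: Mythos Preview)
Your proposal is correct and follows essentially the same route as the paper: identify $\so(p,q)\cong\wedge^{2}V$, apply Lemma~\ref{tangent-space} to the operator $x_j$ on $V_{\C}$, and expand asymptotically so that the $(p+q)^{2}/(2d_j)$ terms cancel. Your treatment is in fact a bit more explicit than the paper's, which simply asserts the value $\dim(\wedge^{2}V)^{x_j}=\frac{(p+q)^{2}}{2d_j}-\frac{p+q}{d_j}+O(1)$ without spelling out the parity check you carry out.
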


\begin{proof}
Let $\g := \so(p,q)$. A brief computation using Lemma \ref{tangent-space} establishes the corollary, noting that 
$\dim (\bigwedge^{2} \g)^{x_j} = \frac{(p+q)^2}{2d_j} - \frac{p+q}{d_j} + O(1)$:
\begin{align*}
\sum_{j =1 }^{m} \left( \frac{\dim \g}{d_j} - \dim \g^{\langle x_j \rangle} \right)
&=  \sum_{j=1}^{m} \left( \frac{(p+q)(p+q-1)}{2d_j} -  \frac{(p+q)^2}{2d_j} \right)  \\
&   -(p+q) \sum_{j=1}^{m}  O(1) \\
&= - \;  O(p+q). 
\end{align*}
\end{proof}

\begin{prop}\label{dimension-compare}
Let $\Gamma$ be a Fuchsian group (cocompact and oriented) with the presentation given by (\ref{pre}). Let $G := SO(p,q)$ and let $\g := \so(p,q)$ its Lie algebra. Let $\rho_0 : \Gamma \to G$ with Zariski dense image in $H := SO(p) \times SO(q)$. Suppose that the following inequality holds:
\begin{equation}\label{check}
\begin{split}
 - \chi(\Gamma)(\dim G - \dim H )  &>  2g   +   m   +  (3m/2 +1)  \; \rank H \\ 
 & - \sum_{i=1}^{m} \left( \frac{ \; \dim   G}{d_i}  -   \; \dim   \g^{\la x_i \ra } \right).
\end{split}
\end{equation}
Then $\rho_0$ is a nonsingular point; hence it belongs to a unique irreducible component $C$ of $X_{\Gamma,G}$. Furthermore, there exists an $\R$-open neighborhood $U$ of $\rho_0$ in $C$ such that the Zariski-closure of
$$
 \{ \rho \in U :  \overline{\rho(\Gamma)} \; \textrm{is conjugate to} \; H \}
$$
constitutes a proper closed subvariety in $C$.
\end{prop}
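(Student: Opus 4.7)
The plan is to compute $\dim C$ exactly via Weil's formula (\ref{tas}), bound from above the dimension of the Zariski-closure of the ``conjugate to $H$'' locus using the conjugation parameterization together with the Larsen--Lubotzky inequality from the introduction, and finally subtract to exploit the hypothesis (\ref{check}).

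First I would invoke Proposition \ref{R-open-nbhd} to conclude that $\rho_0$ is a nonsingular point of $X_{\Gamma,G}$, so it belongs to a unique irreducible component $C$; the proof of that proposition also established $(\g^{*})^{\Gamma} = \g^{H} = 0$, so Weil's formula (\ref{tas}) gives the \emph{equality}
\[
\dim C \;=\; (1-\chi(\Gamma))\dim G \;+\; \sum_{i=1}^{m}\Big(\tfrac{\dim G}{d_i} - \dim \g^{\la x_i\ra}\Big).
\]
Next I would apply Theorem \ref{Deformation-point-II} to fix an $\R$-open neighborhood $U \subseteq C$ of $\rho_0$ in which every $\rho\in U$ is either dense in $G$ or conjugate to $H = SO(p)\times SO(q)$. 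Setting $Y := \{\, \rho \in U : \overline{\rho(\Gamma)} \text{ is conjugate to } H \,\}$, the task reduces to proving $\dim\overline{Y}<\dim C$.

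To bound $\dim\overline{Y}$ I would consider the conjugation morphism
\[
\phi\colon G \times X^{\epi}_{\Gamma,H} \longrightarrow X_{\Gamma,G}, \qquad (g,\rho)\mapsto g\rho g^{-1},
\]
whose image contains $Y$. If $\rho_* = g_0\rho' g_0^{-1}$ with $\rho'$ Zariski-dense in $H$, then $(g,\sigma)\in\phi^{-1}(\rho_*)$ forces $g^{-1}\overline{\rho_*(\Gamma)}g = H$, hence $g\in g_0 N_G(H)$, so the generic fiber has dimension $\dim N_G(H)$. Since $Z_G(H)$ is finite by the centralizer calculation in the proof of Proposition \ref{R-open-nbhd}, and $N_G(H)/Z_G(H)$ injects into $\Aut(H)$ (whose identity component is $\Inn(H)=H/Z(H)$), one checks $\dim N_G(H) = \dim H$. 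Combining with the Larsen--Lubotzky upper bound applied to the semisimple group $H$, I get
\[
\dim\overline{Y} \;\leq\; \dim G + \dim X^{\epi}_{\Gamma,H} - \dim H \;\leq\; \dim G - \chi(\Gamma)\dim H + 2g + m + \bigl(1+\tfrac{3m}{2}\bigr)\rank H.
\]

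Subtracting the two estimates yields
\[
\dim C - \dim\overline{Y} \;\geq\; -\chi(\Gamma)(\dim G - \dim H) + \sum_{i=1}^{m}\Big(\tfrac{\dim G}{d_i}-\dim\g^{\la x_i\ra}\Big) - 2g - m - \bigl(1+\tfrac{3m}{2}\bigr)\rank H,
\]
which is strictly positive precisely by the hypothesis (\ref{check}); thus $\overline{Y}\subsetneq C$, as required. The main obstacle I anticipate is the generic-fiber computation for $\phi$---specifically pinning down $\dim N_G(H) = \dim H$, which hinges on the finiteness of $Z_G(H)$---together with confirming that the Larsen--Lubotzky bound (stated in the introduction for almost simple targets) still applies to the semisimple product $SO(p)\times SO(q)$. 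The rest of the argument is essentially bookkeeping, comparing two instances of Weil's formula, one for $\rho_0\in X_{\Gamma,G}$ and one (implicitly, via the LL inequality) for $X^{\epi}_{\Gamma,H}$.
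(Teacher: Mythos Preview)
Your proposal is correct and follows essentially the same route as the paper: invoke Theorem \ref{Deformation-point-II} for the neighborhood $U$, compute $\dim C$ exactly via Weil's formula using $(\g^*)^\Gamma=0$, bound the conjugate-to-$H$ locus via the conjugation morphism from $G\times X_{\Gamma,H}$, and then subtract using the Larsen--Lubotzky upper bound for $H$. The only cosmetic difference is that the paper obtains the fiber bound $\ge\dim H$ more cheaply via the free $H^\circ$-action $h\cdot(g,\rho)=(gh^{-1},h\rho h^{-1})$, avoiding your normalizer computation entirely; both arguments yield the same inequality $t_G-\dim G>\dim D-\dim H$, which unwinds to the hypothesis (\ref{check}).
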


\begin{proof}
Let $t_{G}$, resp. $t_{H}$, denote the dimension of the Zariski tangent space at $\rho_0$ considered as a point in $X_{\Gamma,G}$, resp., $X_{\Gamma,H}$. 

By Theorem \ref{Deformation-point-II}, $\rho_0$ is  a nonsingular point in $X_{\Gamma,G}$, and therefore it belongs to a unique irreducible component $C$ of $X_{\Gamma,G}$; furthermore, by the same theorem, there exists an $\R$-open neighborhood $U$ of $\rho_0$ such that for each $\rho \in U$, either $\rho(\Gamma)$ is dense in $G$ or dense in a conjugate of $H$. In particular, each representation with Zariski-dense image in a conjugate of $H$ is also nonsingular in $X_{\Gamma,G}$, hence representations in $U$ conjugate to $\rho_0$ belong to $C$ \textit{only}. But, on the other hand, $\rho_0$ considered as a point in $X_{\Gamma,H}$ may not be nonsingular.

Consider all the irreducible components $D_1, \ldots ,D_r$ of $X_{\Gamma,H}$ such that each $D_i$ contains some conjugate of $\rho_0$ that also belongs to $U$. Let $D := \bigcup_{i=1}^r D_i$. Now, we reproduce a part of the proof of Theorem 3.4 of Larsen and Lubotzky \cite{LL} which essentially gives a criterion for the Zariski-closure of $D \cap U$ to be a proper closed subvariety in $C$: For each irreducible component $D_i$ consider the conjugation morphism $\chi_{i}: G \times D_i \to X_{\Gamma,G}$. The fibers of this morphism have dimension at least dim $H$. Indeed, the action of $H^\circ$ on $G \times D_i$ given by 
\[
 h \cdot (g,\rho) = (gh^{-1}, h \rho h^{-1})
\]
is free, and $\chi_{i}$ is constant on the orbits of the action. Therefore, the image of $\chi_{i}$ has dimension at most dim $D_i$ + dim $G$ - dim $H$. It follows that, if 
\begin{equation}\label{dim-ineq}
t_G  \; - \;  \dim G \; > \; \dim D \;  -  \; \dim H 
\end{equation}
then the dimension of the image of $\chi_i$ is less than the dimension of $C$, hence the Zariski-closure of the set of representations in $U$ belonging to some conjugate of $D$ constitutes a proper closed subvariety of $U$. Therefore, to complete the proof, it suffices to show that the inequality holds. 

For every finitely generated group $\Gamma$, every $\R$-algebraic group $H$ with Lie algebra $\h$, and every representation $\rho : \Gamma \to H$ with dense image, an upper bound on the dimension of the Zariski tangent space at $\rho \in X_{\Gamma,H}$ is given by \cite[Proposition 2.1]{LL}:
\begin{equation}\label{upb1}
t_H \leq (1 - \chi(\Gamma))\; \textrm{dim} \;  H + (2g + m + \; \rank  H) + \frac{3}{2}  m   \rank H.
\end{equation}
Since the dimension of each $D_i$ is bounded above by $t_H$, it follows that the upper bound for $t_H$ given by (\ref{upb1}) is also an upper bound for $\dim D$.  

From the proof of Proposition \ref{R-open-nbhd}, we have  $(\g^{*})^{\Gamma} = 0$, so from the formula (\ref{tas}), the dimension of $\rho_0$ in $X_{\Gamma, G}$ is given by:
\begin{equation}\label{tas1}
t_G = \dim Z^1(\Gamma,Ad \circ \rho_0) = (1 - \chi(\Gamma))   \; \dim   G +  \sum_{i=1}^{m} \left( \frac{ \; \dim   G}{d_i}  -   \; \dim   \g^{\la x_i \ra } \right).
\end{equation}
Now, using the expressions (\ref{upb1}) and (\ref{tas1}) in (\ref{dim-ineq})  we obtain the inequality of the hypothesis, and the proposition follows.
\end{proof}

\noindent Finally, we now prove the main result of the paper.
\begin{thm}\label{main-result}
For every Fuchsian group $\Gamma$ and for every special orthogonal group $SO(p,q)$, where $p \neq q$, $q \neq p+1$ and $p,q$ sufficiently large,
\begin{equation*}
 \dim   X^{\epi}_{\Gamma,SO(p,q)}  =  (1- \chi(\Gamma))   \; \dim  SO(p,q) + O(\rank SO(p,q)),
\end{equation*}
where the implicit constants depend only on $\Gamma$.
\end{thm}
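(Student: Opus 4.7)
The plan is to prove matching upper and lower bounds on $\dim X^{\epi}_{\Gamma,SO(p,q)}$. The upper bound is essentially immediate from the Larsen--Lubotzky estimate \eqref{upb1}: every irreducible component $C$ of $X^{\epi}_{\Gamma,SO(p,q)}$ contains some $\rho$ with Zariski-dense image, and the Zariski tangent space at $\rho$ has dimension at most $(1-\chi(\Gamma))\dim G + O(\rank G)$, so the same bound holds for $\dim C$. Taking the maximum over all irreducible components of $X^{\epi}_{\Gamma,SO(p,q)}$ yields the upper inequality in the theorem.

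For the lower bound, I will first exhibit a ``seed'' representation $\rho_0\colon \Gamma \to SO(p,q)$ whose image is Zariski-dense in $H := SO(p)\times SO(q)$ and such that, for each torsion generator $x_i$ of the presentation \eqref{pre}, the eigenvalues of $\Ad\rho_0(x_i)$ on $\g = \so(p,q)$ have multiplicities of the form $\tfrac{p+q}{d_i} + O(1)$. Such $\rho_0$ can be built by choosing rotation blocks of orders $d_i$ inside each factor of $SO(p)\times SO(q)$, distributing the eigenvalues of the action on $V$ as evenly as possible among the $d_i$-th roots of unity (which then controls the eigenvalues on $\Ad$ via $\so(V)\cong\bigwedge^{2} V$), solving the long relator of \eqref{pre} using the connectedness of the compact factors, and then perturbing generically to achieve Zariski-density. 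By Proposition \ref{R-open-nbhd} the point $\rho_0$ is nonsingular in $X_{\Gamma,G}$, so it belongs to a unique irreducible component $C$.

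Next I will check the hypothesis \eqref{check} of Proposition \ref{dimension-compare}. Since $\dim G - \dim H = pq$ and $-\chi(\Gamma)$ is a fixed positive constant depending only on $\Gamma$, the left-hand side of \eqref{check} grows like $|\chi(\Gamma)|\,pq$. On the right-hand side, the terms $2g + m + (3m/2+1)\rank H$ contribute $O(\rank H) = O(p+q)$, and by Corollary \ref{error-term}, applied to the eigenvalue-multiplicity property arranged above, the sum $\sum_{i=1}^m\bigl(\tfrac{\dim G}{d_i}-\dim\g^{\langle x_i\rangle}\bigr)$ is $-O(p+q)$; so the whole right-hand side is $O(p+q)$, dominated by $c\cdot pq$ once $p,q$ are sufficiently large. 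Thus \eqref{check} holds, and Proposition \ref{dimension-compare} together with Theorem \ref{Deformation-point-II} shows that the set of representations in an $\R$-open neighborhood of $\rho_0$ in $C$ which are conjugate to $H$ is a proper closed subvariety, while its complement in the neighborhood consists of representations with Zariski-dense image in $G$ and is therefore a Zariski-dense subset of $C$. Hence $C \subseteq X^{\epi}_{\Gamma,G}$.

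To conclude, since $\rho_0$ is a nonsingular point, $\dim C$ equals the dimension of the Zariski tangent space at $\rho_0$ in $X_{\Gamma,G}$, which by \eqref{tas1} is
$$ \dim C \;=\; (1-\chi(\Gamma))\dim G \;+\; \sum_{i=1}^m\!\left(\frac{\dim G}{d_i} - \dim \g^{\langle x_i\rangle}\right), $$
and Corollary \ref{error-term} bounds the sum by $O(\rank G)$. Combined with the upper bound, this proves the theorem. The main obstacle is the construction of the seed $\rho_0$: one must simultaneously secure Zariski-density in $SO(p)\times SO(q)$, compatibility with the long relator of \eqref{pre}, and the precise uniform distribution of the eigenvalues of each $\rho_0(x_i)$ needed to invoke Corollary \ref{error-term}; everything else in the argument is essentially bookkeeping around Proposition \ref{dimension-compare} and Theorem \ref{Deformation-point-II}.
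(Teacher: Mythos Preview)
Your proposal is correct and follows essentially the same route as the paper's proof. The only cosmetic difference is that the paper handles the seed construction by citing \cite[Proposition~3.1]{LL} for Zariski-dense $\sigma\colon\Gamma\to SO(p)$ and $\tau\colon\Gamma\to SO(q)$ with the prescribed eigenvalue multiplicities on $V$, and then invokes Goursat's lemma (using $p\neq q$) to get density of $(\sigma,\tau)$ in $SO(p)\times SO(q)$, rather than appealing to a generic perturbation.
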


\begin{proof}
Let $G := SO(p,q)$ and let $\g := \so(p,q)$ its Lie algebra. As noted in \S \ref{intro}, a result of Larsen and Lubotzky \cite[Proposition 2.1]{LL} essentially gives an upper bound on the dimension of $X^{\epi}_{\Gamma,G}$. Thus, it then suffices to establish a lower bound. Note, $G$ is split when $p = q$ or $q = p+1$, and the estimates in these cases are treated in \cite{LL} as explained briefly in \S \ref{intro}. 

Proposition 3.1 \cite{LL} ensures that there exists representations $\rho : \Gamma \to SO(p)$ and $\tau : \Gamma \to SO(q)$, such that $\rho(\Gamma)$ is Zariski dense in $SO(p)$ and $\tau(\Gamma)$ is Zariski-dense in $SO(q)$. Moreover, these representations may be chosen so that the multiplicity of every $d_i^{\textrm{th}}$ root of unity as an eigenvalue of $\sigma(x_i)$, resp., $\tau(x_i)$ is of the form $p/d_i + O (1)$, resp., $q/d_i + O(1)$ where the implicit constant depend only on $d_i$. Consider the product representation $\rho_0 = (\sigma,\tau) : \Gamma \rightarrow SO(p) \times SO(q)$.  By Goursat's lemma, $\rho_0$ is dense in $K$. Clearly, the multiplicity of every $d_i^{\textrm{th}}$ root of unity as an eigenvalue of $\rho_0(x_i)$ is of the form $(p+q)/d_i + O (1)$, where the implicit constant depend only on $d_i$. 

By Lemma \ref{R-open-nbhd}, $\rho_0$ is a nonsingular point, and by Theorem \ref{Deformation-point-II} there exists an $\R$-open neighborhood $U$ of $\rho_0$ consisting of representations $\rho$ such that  $\overline{\rho(\Gamma)} = G$ or $\overline{\rho(\Gamma)}$ is conjugate to $SO(p) \times SO(q)$. By Proposition \ref{dimension-compare}, representations of the latter type constitute a proper closed subvariety in $C$, for the  inequality (\ref{check}), using Corollary \ref{error-term}, takes the following simple form which evidently holds when $p$ and $q$ are sufficiently large. 
\[
\alpha + \beta (p+q) < -\chi(\Gamma)pq,
\]
where $\alpha$ and $\beta$ are constants that depend only $d_i$.  Hence representations of the former type constitute an nonempty $\R$-open subset in $C$. Since a nonempty $\R$-open subset in $C$ is Zariski dense, it follows that $C$ is contained in $X^{\epi}_{\Gamma,G}$. By Corollary \ref {error-term} and equation (\ref{tas}), we have
$$
\dim X^{\epi}_{\Gamma,G} \geq \dim C = \dim Z^1(\Gamma, \Ad \circ \rho_0) = (1 - \chi(\Gamma))   \; \dim   G  \; + \; O(\rank G).
$$
\end{proof}

\section{Final Remarks}\label{final-rmk}

Notice that the results of section \S \ref{deformation-point} should extend easily, with appropriate modifications, to the case of noncompact nonsplit symplectic groups $Sp(p,q)$. For instance, skew-symmetric bilinear forms instead of symmetric bilinear forms should be considered and the representation $\rho_0 : \Gamma \to Sp(p,q)$ with Zariski dense image in $Sp(p) \times Sp(q)$ may be taken as the deformation point. On the other hand, these results do not generalize easily to the unitary groups $SU(p,q)$. For instance, if we consider the deformation point as the representation $\rho_0 : \Gamma \to SU(p,q)$ with dense image in $SU(p) \times SU(q)$, then certainly Proposition \ref{R-open-nbhd} does not hold, for the dimension of the centralizer of $SU(p) \times SU(q)$ in $SU(p,q)$ is $1$, therefore Weil's result on nonsingularity  of point is not applicable to conclude that $\rho_0$ is a nonsingular point in $X_{\Gamma,SU(p,q)}$ (see Theorem \ref{real-points}); as it is evident by now from the proof of Theorem \ref {Deformation-point-II}, nonsingularity of the chosen deformation point $\rho_0$ is crucial.


\begin{thebibliography}{SGA3}


\bibitem[Bo]{Bo} A. Borel, \emph{Linear Algebraic Groups}, Springer, New York, (1991).

\bibitem[Bo1]{Bo1} A. Borel:
On free subgroups of semisimple groups.
\textit{Enseign. \ Math.} \textbf{(2), 29} (1983), no. \ 1--2, 151--164.




\bibitem[GLM]{GLM} R. Guralnick, M. Larsen, C. Manack: 
Low Degree Representations of Simple Lie Groups,
\textit{Proc. Amer. Math. Soc.}, \textbf{140} (2012), no. \ 5,  1823 -- 1834. 



\bibitem[Hu]{Hu}Humphreys, James E.:
Linear algebraic groups.
Graduate Texts in Mathematics, no. \ 21.
Springer-Verlag, New York-Heidelberg, (1975).


\bibitem[La]{La}Larsen, Michael:
Rigidity in the invariant theory of compact groups, arXiv: math.RT/0212193, \emph{to appear in Internat. Math. Res. Notices}



\bibitem[LL]{LL} M. Larsen and A. Lubotzky:
Representation varieties of Fuchsian groups. 
From Fourier analysis and number theory to radon transforms and geometry. Dev. Math.,\textbf{28}, (2013), 375--397.


\bibitem[LM]{LM} A. Lubotzky, Andy R. Magid:
Varieties of Representations of Finitely Generated Groups,  
\textit{American Mathematical Soc.}, \textbf{336}, (1985).

\bibitem[LS]{LS}
M. Liebeck and A. Shalev:
Fuchsian groups, finite simple groups and representation varieties.
\textit{Invent.\ Math.}, \textbf{159} (2005), no.\ 2, 317--367.


\bibitem[LS1]{LS1} M. W. Liebeck and A. Shalev:
    Fuchsian groups, coverings of Riemann surbfaces, subgroup growth, random quotients and random walks.
    \textit{J.\ Algebra} \textbf{276} (2004), no. 2, 552--601.

 
\bibitem[Mu]{Mu}
Mumford. D:
\emph{The red book of varieties and schemes}, Second, expanded ed., Springer Berlin Heidelberg, 1999.



\bibitem[OV]{OV}
Onishchik and Vinberg (eds.):
\emph{Lie Groups and Lie Algebras III, Encyclopaedia of Mathematical Sciences}, \textbf{Vol. 41}, Springer-Verlag, Berlin, 1994.



\bibitem[Sa]{Sa}
Satake I. :
Classification Theory of Semi-Simple Algebraic Groups (Lecture Notes in Pure and Applied Mathematics Ser.).
Marcel Dekker Inc (1971).



\bibitem[Ta]{Ta} Selim, Taufik Mohamed:
On maximal subalgebras in classical real Lie algebras. 
\textit{Selecta Mathematica Sovietica}, \textbf{6, no. 2}, (1987), 163--176. 


\bibitem[We]{We} Weil, Andr\'e:
Remarks on the cohomology of groups. 
\textit{Annals of Math.} \textbf{80} (1964), 149--157.


\end{thebibliography}
\end{document}